\documentclass[12pt,letterpaper]{article}
\usepackage{amssymb,amsmath,amsthm}
\usepackage{subcaption}
\usepackage{multirow}
\usepackage{graphicx}
\usepackage{booktabs}
\usepackage{sectsty}
\usepackage[numbers,square]{natbib}
\setlength{\bibsep}{0.0pt}
\sectionfont{\normalsize}
\subsectionfont{\normalsize}
\subsubsectionfont{\normalsize}
\paragraphfont{\normalsize}
\newtheorem{thm}{Theorem}
\newtheorem*{thm*}{Theorem}
\newtheorem{lem}[thm]{Lemma}

\newtheorem{cor}[thm]{Corollary}
\newtheorem{clm}[thm]{Claim}
\newcommand{\llll}[1] {\left #1}
\newcommand{\rrrr}[1] {\right #1}

\newcommand{\dddd}[2]{\dfrac{#1}{#2}}
\newcommand{\pppp}{\partial}
\newcommand{\aaaa}{\alpha}
\newcommand{\tttt}{\tau}
\newcommand{\ssss}{\sigma}

\newcommand{\dddddd}{\delta}
\newcommand{\llllll}{\lambda}
\newcommand{\bbbb}{\beta}
\newcommand{\GGGG}{\Gamma}
\newcommand{\oooo}{\omega}
\newcommand{\zzzz}{\zeta}
\newcommand{\eeee}{\epsilon}

\begin{document}
\nocite{*}
\title{{\bf \normalsize THREE-POINT COMPACT APPROXIMATION FOR THE CAPUTO FRACTIONAL DERIVATIVE }}
\author{Yuri Dimitrov\\
Department of Applied Mathematics and Statistics \\
University of Rousse, Rousse  7017, Bulgaria\\
\texttt{ymdimitrov@uni-ruse.bg}}
\maketitle
\begin{abstract} 
In this paper we derive the fourth-order asymptotic expansions of the trapezoidal approximation for the fractional integral
 and  the $L1$ approximation for the  Caputo derivative. We use the expansion of the $L1$ approximation to obtain the
three point compact approximation for the Caputo derivative
\begin{equation*}
\dddd{1}{\GGGG(2-\aaaa)h^\aaaa}\sum_{k=0}^{n} \dddddd_k^{(\aaaa)} y_{n-k}=\dddd{13}{12}y^{(\aaaa)}_n-\dddd{1}{6}y^{(\aaaa)}_{n-1}+\dddd{1}{12}y^{(\aaaa)}_{n-2}+O\llll(h^{3-\aaaa}\rrrr),
\end{equation*}
with weights $\dddddd_0^{(\aaaa)}=1-\zzzz(\aaaa-1),\; \dddddd_n^{(\aaaa)}=(n-1)^{1 -\aaaa}-n^{1-\aaaa},$
$$
\dddddd_1^{(\aaaa)}=2^{1-\aaaa}-2+2\zzzz(\aaaa-1),\; \dddddd_2^{(\aaaa)}=1-2^{2-\aaaa}+3^{1-\aaaa}-\zzzz(\aaaa-1),$$
$$\dddddd_k^{(\aaaa)}=(k-1)^{1-\aaaa}-2k^{1-a}+(k+1)^{1-\aaaa},\quad (k=3\cdots,n-1),$$
where  $y$ is a differentiable function which satisfies $y'(0)=0$. The numerical solutions of the fractional relaxation and the time-fractional  subdiffusion equations are discussed.

\noindent
{\bf 2010 Math Subject Classification:} 26A33, 34A08, 34E05, 41A25\\
{\bf Key Words and Phrases:} Caputo derivative, integral approximation,  fractional differential equation, compact finite difference scheme.
\end{abstract}
\section{Introduction}\label{Intro} 
Fractional calculus is a rapidly growing field of science. 
Differential equations with the Caputo and Riemann-Liouville fractional derivatives  are used for modeling  processes in economics, physics and engineering \cite{Cartea2007,KaczorekRogowski2015,LimaFordLumb2014,Mainardi1996,MuslihAgrawalBaleanu2010}. 
The Caputo fractional derivative and the fractional integral of arbitrary order  are defined as a convolution of  the  derivatives of the function and the power function.
The fractional integral of order $\aaaa>0$ on the interval $[0,x]$ and the Caputo derivative  of order $n+\aaaa$, where $0<\aaaa<1$ and $n$ is a positive integer are defined as
$$I^\aaaa y(x)=\dddd{1}{\GGGG(\aaaa)}\int_0^x (x-\xi)^{\aaaa-1} y(\xi)d\xi,
$$
$$y^{(n+\aaaa)}(x)=D^{n+\aaaa} y(x)=\dddd{1}{\Gamma (1-\aaaa)}\int_0^x \dfrac{y^{(n+1)}(t)}{(x-t)^{\aaaa}}dt.$$
The Caputo derivative is expressed as a composition of the fractional integral of order $1-\aaaa$ and the derivative of the function of order $n+1$
$$y^{(n+\aaaa)}(x)=I^{1-\aaaa} y^{(n+1)}(x).$$
While analytical methods such as separation of variables, Laplace and Fourier transforms are used for solution of fractional differential equations, most of the equations used in practical applications are solved by a numerical method. The finite difference schemes for numerical solution of partial fractional differential equations use a discretization of the fractional derivative. 

Let $N$ be a positive integer and $\mathcal{X}_N=\{x_n=nh ; n=0,1,\cdots,N\}$ be a uniform mesh on the interval $[0,x]$ with step size $h=x/N$. Denote by $y_n$ the value of the function $y$ at the point $x_n$.
An  important approximation for the Caputo derivative is the  $L1$ approximation
\begin{equation}\label{L1}
y^{(\alpha)}_n  \approx \dfrac{1}{\GGGG(2-\aaaa)h^\alpha}\sum_{k=0}^{n} \ssss_k^{(\alpha)} y_{n-k},
\end{equation}
with weights $\ssss_0^{(\alpha)}=1$,  $\ssss_n^{(\alpha)}=(n-1)^{1-a}-n^{1-a}$,
$$\ssss_k^{(\alpha)}=(k+1)^{1-\alpha}-2k^{1-\alpha}+(k-1)^{1-\alpha}, \quad (k=2,\cdots,n-1).$$
 When the function $y$ has a continuous second derivative, the accuracy of the $L1$ approximation is $O(h^{2-\alpha})$ (\cite{LinXu2007}). The $L1$ approximation has been often used for numerical solution of ordinary and two-dimensional partial fractional differential equations.
The accuracy of the $L1$ approximation is insufficient for numerical solution of multidimensional  fractional differential equations which require a large number of computations, especially when the order of fractional differentiation is close to one. 
There is an increasing interest in high-order approximations for the Caputo and Riemann-Liouville fractional derivatives, which stems from the practical and the mathematical value of the fractional derivatives. 
In previous work \cite{Dimitrov2015_1} we determined the second-order expansion of the $L1$ approximation \eqref{2ndL1} and the second-order approximation for the Caputo derivative \eqref{ML1}, by modifying the first three coefficients of the $L1$ approximation with the value of the Riemann zeta function  $\zzzz(\aaaa-1)$,
\begin{equation}\label{ML1}
y^{(\aaaa)}_n=\dddd{1}{\GGGG(2-\aaaa)h^\aaaa}\sum_{k=0}^n \delta_k^{(\aaaa)} y_{n-k}+O\llll(h^{2}\rrrr),
\end{equation}
where $\dddddd_k^{(\aaaa)}=\ssss_k^{(\aaaa)}$ for $2\leq k\leq n$ and
$$\dddddd_0^{(\aaaa)}=\ssss_0^{(\aaaa)}-\zzzz(\aaaa-1),\; \dddddd_1^{(\aaaa)}=\ssss_1^{(\aaaa)}+2\zzzz(\aaaa-1),\; \dddddd_2^{(\aaaa)}=\ssss_2^{(\aaaa)}-\zzzz(\aaaa-1).$$
One approach for constructing high-order approximations for the Caputo and Riemann-Liouville fractional derivatives is based on  approximating the function $y$ on the subintervals of the mesh $\mathcal{X}_N$ by  Lagrange polynomials or spline interpolation \cite{Blank1996,CaoLiChen2015,Diethelm1997,LiChenYe2011,LiZhaoDengWu2014}. Another commonly used approach for approximation of the fractional derivative  is based on  the   expansion of the generating function and the Fourier transform of the approximation \cite{ChenDeng2014,ChenDeng2014_2,Dimitrov2014,LiDing2014,WuDingLi2014,ZhaoDeng2015}. The two approaches  are effective for constructing approximations for the fractional derivatives, although the formulas for the weights of the high-order approximations  may involve complex expressions. The compact approximations use an approximation for the fractional derivative and the higher accuracy is achieved by approximating a linear sum of fractional derivatives on adjacent nodes of the mesh. The compact approximations combine simpler expressions for the weights with a high-order accuracy of the approximation. Compact finite difference schemes for numerical solutions of partial fractional differential equations based on the Gr\"unwald and shifted  Gr\"unwald approximations are discussed in \cite{Dimitrov2014,JiSun2015,WangVong2014,YeLiuAnh2015,ZhaoDeng2015}. Yan,Pal and Ford \cite{YanPalFord2014} construct an approximation of the Caputo derivative and a numerical solution of the fractional relaxation equation with accuracy $O\llll(h^{3-\aaaa}\rrrr)$.
Gao, Sun and Zhang \cite{GaoSunZhang2014} use a quadratic interpolation on a three-point stencil to derive an $L1-2$ formula  approximation for the Caputo derivative  with accuracy $O\llll(h^{3-\aaaa}\rrrr)$. In recent work,  Li, Wu and Ding \cite{LiWuDing2015} derive an approximation for the Caputo derivative of order $3-\aaaa$ and compute the numerical solutions of the advection-diffusion equation with accuracy $O\llll(\tttt^{3-\aaaa}+h^2\rrrr)$
 and $O\llll(\tttt^{3-\aaaa}+h^4\rrrr)$. In \cite{Dimitrov2015_2} we discuss a method for improving the accuracy of the numerical solutions of the ordinary fractional relaxation equation 
\begin{equation*} 
y^{(\aaaa)}(x)+\llllll y(x)=F(x),
\end{equation*}
and the time-fractional subdiffusion equation
		\begin{equation*} 
\dfrac{\partial^\alpha u(x,t)}{\partial t^\alpha}=\dfrac{\partial^2 u(x,t)}{\partial x^2}+F(x,t),
	\end{equation*}
	when the solutions are nonsmooth functions at the initial point and we compare the numerical solutions  which use the $L1$ approximation \eqref{L1} and the modified $L1$ approximation \eqref{ML1} for the Caputo derivative.
The method is based on computing the fractional Taylor polynomials of the solution at the initial point of fractional differentiation.
In the present paper we extend the results of \cite{Dimitrov2015_1}. In Lemma 11 we determine the fourth-order expansion of the $L1$ approximation for the Caputo derivative
\begin{align*}
\dddd{1}{\GGGG(2-\aaaa)h^\aaaa}\sum_{k=0}^N \ssss_k^{(\aaaa)} y(x-kh)=y^{(\aaaa)}(x)+&\dddd{\zzzz(\aaaa-1)}{\GGGG(2-\aaaa)}y''(x)h^{2-\aaaa}\\
+\llll(\dddd{d^2}{dx^2}y^{(\aaaa)}(x)-\dddd{y'(0)}{\GGGG(-\aaaa)x^{1+\aaaa}}\rrrr)&\dddd{h^2}{12}-\dddd{\zzzz(\aaaa-2)}{\GGGG(2-\aaaa)}y'''(x)h^{3-\aaaa}\\
+\llll(\zzzz(\aaaa-3)+\dddd{\zzzz(\aaaa-1)}{6}\rrrr)&\dddd{y^{(4)}(x)}{2\GGGG(2-\aaaa)}h^{4-\aaaa}+O\llll(h^4\rrrr),
\end{align*}
and the three-point compact approximation
\begin{equation}\label{CML1}
\dddd{1}{\GGGG(2-\aaaa)h^\aaaa}\sum_{k=0}^{n} \dddddd_k^{(\aaaa)} y_{n-k}\approx \dddd{13}{12}y^{(\aaaa)}_n-\dddd{1}{6}y^{(\aaaa)}_{n-1}+\dddd{1}{12}y^{(\aaaa)}_{n-2}.
\end{equation}
When the function  $y$ is a differentiable function and $y'(0)=0$ the accuracy of compact approximation \eqref{CML1} is $O\llll(h^{3-\aaaa}\rrrr)$. 

The outline of the paper is as follows. In section 2 we introduce  the facts from Fractional calculus used in the rest of the paper. In section 3 we determine the fourth-order expansion \eqref{TE5th} of the trapezoidal  approximation for the fractional integral of order $2-\aaaa$. In section 4 we determine the fourth-order  expansion \eqref{4thL1} of the $L1$ approximation  for the Caputo derivative and the three-point compact approximation \eqref{CML1}. In section 5 we compare the numerical  solutions of the fractional relaxation and  subdiffusion equations which use approximations \eqref{L1}, \eqref{ML1} and \eqref{CML1}.
\section{Preliminaries}
The Caputo derivative of  order $\aaaa$, when $0<\aaaa<1$ is defined as
$$D^\aaaa y(x)=y^{(\aaaa)}(x)=\dddd{d^{\aaaa}y(x)}{d x^\aaaa}=\dddd{1}{\Gamma (1-\aaaa)}\int_0^x \dfrac{y'(t)}{(x-t)^{\aaaa}}dt.$$
Denote by $J^\aaaa y(x)$ the fractional integral
$$J^\aaaa y(x)=\int_0^x (x-t)^\aaaa y(t)dt.$$
When $\aaaa=0$ the fractional integral $J^0y(x)$ is equal to the definite integral of the function $y$ on the interval $[0,x]$.
The  fractional integrals $J^\aaaa y(x)$ and $I^\aaaa y(x)$ are related with
$$ J^{\aaaa} y(x)=\GGGG(1+\aaaa) I^{1+\aaaa} y(x).$$
In \cite{Dimitrov2015_1} we showed that 
\begin{equation*}
\GGGG(2-\aaaa) y^{(\aaaa)}(x)=\GGGG(2-\aaaa)I^{2-\aaaa}y^{\prime\prime}(x)+y^\prime (0)x^{1-\aaaa}.
\end{equation*}
Then
\begin{equation}\label{CDF}
\GGGG(2-\aaaa) y^{(\aaaa)}(x)=J^{1-\aaaa}y''(x)+x^{1-\aaaa}y'(0).
\end{equation}
The Miller-Ross sequential  derivative  is defined as  
$$y^{[\aaaa_1]}(x)=y^{(\aaaa_1)}(x),\quad y^{[\aaaa_1+\aaaa_2]}(x)=D^{\aaaa_1}D^{\aaaa_2}y(x),$$
$$y^{[\aaaa_1+\aaaa_2+\cdots+\aaaa_n]}(x)=D^{\aaaa_1}D^{\aaaa_2}\cdots D^{\aaaa_n}y(x).$$
Denote
 $$y^{[n\aaaa]}(x)=y^{[\aaaa+\aaaa+\cdots+\aaaa]}(x)=D^{\aaaa}D^{\aaaa}\cdots D^{\aaaa}y(x).$$
The fractional Taylor polynomials of degree $m$ for the  Miller-Ross derivatives $y^{[n\aaaa]}(0)$   are defined as
$$T^{(\aaaa)}_m(x)= \sum_{n=0}^m \dddd{y^{[n\aaaa]}(0)}{\GGGG(\aaaa n+1)}x^{\aaaa n}.$$
The fractional Taylor polynomials approximate the value of the function  
$y(h)\approx T^{(\aaaa)}_m (h)$, where $h$ is a small positive number.
The one-parameter and two-parameter Mittag-Leffler functions are defined for $\aaaa>0$ as
$$E_\alpha (x)=\sum_{n=0}^\infty \dfrac{x^n}{\Gamma(\alpha n+1)}, \quad
E_{\alpha,\beta} (x)=\sum_{n=0}^\infty \dfrac{x^n}{\Gamma(\alpha n+\beta)}.$$
When $\aaaa=\bbbb=1$ the Mittag-Leffler functions are equal to the exponential function.
The Caputo derivative and the fractional integral of the exponential function are expressed with the Mittag-Leffler functions as \cite{Ishteva2005}
$$D^\aaaa e^{\llllll x}=\llllll x^{1-\aaaa}E_{1,2-\aaaa}(\llllll x),\quad I^\aaaa e^{\llllll x}=x^{\aaaa}E_{1,1+\aaaa}(\llllll x).
$$
From the formula for the sine function
$$\sin x=\dddd{e^{i x}-e^{-i x}}{2i},$$
we obtain
$$D^\aaaa \sin (\llllll x)=\dddd{\llllll}{2} x^{1-\aaaa}\llll(E_{1,2-\aaaa}(i \llllll x)+E_{1,2-\aaaa}(-i \llllll x)\rrrr),$$
$$ I^\aaaa \sin (\llllll x)=-\dddd{i}{2}x^{\aaaa}\llll(E_{1,1+\aaaa}(i\llllll x)-E_{1,1+\aaaa}(-i\llllll x)\rrrr),
$$
and
$$J^{\aaaa}e^{\llllll x}=\GGGG (1+\aaaa) x^{1+\aaaa} E_{1,2+\aaaa}(\llllll x),$$
$$J^{\aaaa}\sin (\llllll x)=-\dddd{i}{2}\GGGG (1+\aaaa) x^{1+\aaaa}\llll(E_{1,2+\aaaa}(i\llllll x)-E_{1,2+\aaaa}(-i\llllll x)\rrrr).$$
The expressions for the analytical solutions of fractional differential equations  often contain the  Mittag-Leffler functions. The initial value problem,
	\begin{equation*} 
	\left\{
	\begin{array}{l l}
	y^{(\aaaa)}+\lambda y=0,\quad t>0,\quad  (n-1<\aaaa<n),&  \\
  y^{(k)}(0)=b_k,\qquad (k=0,1,\cdots,n-1),&  \\
	\end{array} 
		\right . 
	\end{equation*}
	has the solution 
	$$y(x)=\sum_{k=0}^{n-1} b_k x^k E_{\aaaa,k+1}\llll(-\lambda x^\aaaa\rrrr).$$
	The Gr\"unwald-Letnikov  fractional derivative is closely related to  Caputo and  Riemann-Liouville derivatives
$$\widetilde{D}^\alpha y(x) = \lim_{h\downarrow 0} \dfrac{1}{h^\alpha} \sum_{n=0}^{N} (-1)^n\binom{\alpha}{n} y(x-n h),$$
where
$$\binom{\alpha}{n}=\dfrac{\Gamma(\alpha+1)}{\Gamma(n+1)\Gamma(\alpha-n+1)}=\dfrac{\alpha(\alpha-1)\cdots(\alpha-n+1)}{n!}.$$
Denote $\oooo_n^{(\alpha)}=(-1)^n\binom{\alpha}{n}$. The Gr\"unwald formula approximation for the Caputo derivative  has  a first-order accuracy
\begin{equation*} 
\dddd{1}{h^\aaaa}\sum_{k=0}^{N} \oooo_n^{(\alpha)} y(x-k h)=y^{(\alpha )}(x)+O\left(h\right).
\end{equation*}
When the function $y$ is differentiable  and $y(0)=y'(0)=y''(0)=0$, the Gr\"unwald formula approximates the Caputo derivative at the point $x-\alpha  h/2$ with  second-order accuracy
\begin{equation*} 
\dddd{1}{h^\aaaa}\sum_{k=0}^{N} \oooo_n^{(\alpha)} y(x-k h)=y^{(\alpha )}\left(x-\dfrac{\alpha  h}{2}\right)+O\left(h^2\right).
\end{equation*}
The Gr\"unwald formula approximation for the Caputo derivative has a fourth-order expansion
\begin{align*} 
\dddd{1}{h^\aaaa}\sum_{k=0}^{N} \oooo_n^{(\alpha)} y(x-k h)=y^{(\alpha)}(x)-\dddd{\aaaa}{2}y&^{[1+\alpha]}(x) h+\dddd{\aaaa(1+3\aaaa)}{24}y^{[2+\alpha]}(x) h^2\\
&+\dddd{\aaaa^2(1+\aaaa)}{48}y^{[3+\alpha]}(x) h^3+O\left(h^4\right),
\end{align*}
when $y(0)=y'(0)=y''(0)=y'''(0)=0$. In \cite{Dimitrov2014} we discuss the two-point and three-point compact Gr\"unwald approximations for the Caputo derivative and the numerical solutions of the fractional relaxation and subdiffusion equations.
\begin{equation*} 
\dddd{1}{h^\aaaa}\sum_{k=0}^{n} \oooo_k^{(\alpha)} y_{n-k}=\llll(\dfrac{\aaaa}{2}  \rrrr)y^{(\alpha )}_{n-1}+
\llll(1-\dfrac{\aaaa}{2} \rrrr)y^{(\alpha )}_{n}+O\left(h^2\right),
\end{equation*}
\begin{align}  \label{3PTG}
\dddd{1}{h^\aaaa}\sum_{k=0}^{n} \oooo_k^{(\alpha)} y_{n-k}=\llll(\dddd{a^2}{8}-\dddd{5a}{24}\rrrr)&y^{(\aaaa)}_{n-2}+\llll(\dddd{11a}{12}-\dddd{a^2}{4}\rrrr)y^{(\aaaa)}_{n-1}\\
&+\llll(1-\dddd{17a}{24}+\dddd{a^2}{8}\rrrr)y^{(\aaaa)}_{n}+O\llll( h^3\rrrr).\nonumber
\end{align}
In the  present paper we derive the fourth-order expansion \eqref{TE5th} of the trapezoidal approximation for the fractional integral $J^\aaaa y(x)$ and three-point compact approximation \eqref{CML1} for the modified $L1$ approximation. The trapezoidal sum of the function $y$ on the interval $[0,x]$ is defined as
$$\mathcal{T}_h[y]=\dddd{h}{2}\llll(y(0)+\sum_{k=1}^{n-1}y(kh)+y(x)   \rrrr).$$
When the function $y$ is a differentiable function the trapezoidal sum is a second-order approximation for the definite integral on the interval $[0,x]$,
$$\mathcal{T}_h[y]=\int_0^x y(t)dt+O\llll(h^2 \rrrr).
$$
The asymptotic expansion of the trapezoidal approximation for the definite integral is given by the Euler-MacLaurin formula  \cite{AbramowitzStegun1964,Kouba2013,Lampret2001},
$$\mathcal{T}_h[y]=\int_0^x y(\xi)d\xi+\sum_{n=1}^\infty \dddd{B_{2n}}{(2n)!}\llll(y^{(2n-1)}(x) -y^{(2n-1)}(0)\rrrr)h^{2n}.
$$
The Bernouli numbers $B_n$ with an odd index are equal to zero, and the Bernoulli numbers with an even index are expressed with the values of the Riemann zeta function at the even integrs
$$B_{2n}=(-1)^{n+1}\dddd{2(2n)!}{(2\pi)^{2n}}\zzzz(2n).$$
The Bernoulli numbers have a high growth rate and the above sum diverges for many standard calculus functions. The properties of the Bernoulli numbers and the Bernoulli polynomials are discussed  in Kouba \cite{Kouba2013}.
The asymptotic expansion formula for the sum of the powers of the first $n$ integers is related to the Euler-MacLaurin formula \cite{AbramowitzStegun1964}
\begin{equation}\label{Expansion1}
\sum_{k=1}^{n-1}k^\aaaa=\zzzz(-\aaaa)+\dddd{n^{1+\aaaa}}{1+\aaaa}
\sum_{m=0}^{\infty}\binom{\aaaa+1}{m}\dddd{B_m}{n^m}.
\end{equation}
In \cite{Dimitrov2015_1} we use expansion formula  \eqref{Expansion1} to derive the second order expansions
\begin{equation*}
h^{1+\aaaa}\sum_{k=0}^N k^{\aaaa} y(x-kh)=J^{\aaaa}y(x)+\dddd{y(0)x^\aaaa}{2}h+\zzzz(-\aaaa) y(x)h^{1+\aaaa}+O\llll(h^2\rrrr),
\end{equation*}
\begin{equation}\label{2ndL1}
\dddd{1}{\GGGG(2-\aaaa)h^\aaaa}\sum_{k=0}^N \ssss_k^{(\aaaa)} y(x-kh)=y^{(\aaaa)}(x)+\dddd{\zzzz(\aaaa-1)}{\GGGG(2-\aaaa)}y^{\prime\prime}(x) h^{2-\aaaa}+O\llll(h^2\rrrr).
\end{equation}
In the present paper we extend the results from \cite{Dimitrov2015_1}. We determine the fourth-order expansions
\begin{align}\label{TE5th}
h^{1+\aaaa}\sum_{k=0}^N k^{\aaaa} y(x-kh)-\dddd{y(0)x^\aaaa}{2}h=J^{\aaaa}&y(x)+\zzzz(-\aaaa) y(x)h^{1+\aaaa}\\
+\dddd{\aaaa x^{\aaaa-1}y(0)-x^\aaaa y'(0)}{12}&h^2-y'(x)\zzzz(-1-\aaaa)h^{2+\aaaa}\nonumber\\
+\dddd{y''(x)}{2}\zzzz(-2-\aaaa)&h^{3+\aaaa}+O\llll(h^4 \rrrr),\nonumber
\end{align}
\begin{align}\label{4thL1}
\dddd{1}{\GGGG(2-\aaaa)h^\aaaa}\sum_{k=0}^N \ssss_k^{(\aaaa)}& y(x-kh)=y^{(\aaaa)}(x)+y^{[2+\aaaa]}(x)\dddd{h^2}{12}\\
+\dddd{\zzzz(\aaaa-1)}{\GGGG(2-\aaaa)}&y''(x)h^{2-\aaaa}-\dddd{y'(0)}{\GGGG(-\aaaa)x^{1+\aaaa}}\dddd{h^2}{12}-\dddd{\zzzz(\aaaa-2)}{\GGGG(2-\aaaa)}y'''(x)h^{3-\aaaa}\nonumber\\
&+\llll(\zzzz(\aaaa-3)+\dddd{\zzzz(\aaaa-1)}{6}\rrrr)\dddd{y^{(4)}(x)}{2\GGGG(2-\aaaa)}h^{4-\aaaa}+O\llll(h^4\rrrr),\nonumber
\end{align}
and the three-point compact approximation for the Caputo derivative
\begin{equation*}
\dddd{1}{\GGGG(2-\aaaa)h^\aaaa}\sum_{k=0}^{n} \dddddd_k^{(\aaaa)} y_{n-k}\approx \dddd{1}{12}y^{(\aaaa)}_{n-2}-\dddd{1}{6}y^{(\aaaa)}_{n-1}+\dddd{13}{12}y^{(\aaaa)}_n.
\end{equation*}
The   accuracy of the approximation is  $O\llll( h^{3-\aaaa}\rrrr)$ when  $y'(0)=0$.
The functions  $y(x)=e^x-x$  and $y(x)=\cos x$ are differentiable functions and satisfy the condition $y'(0)=0$. In Table 1 and Table 2 we compute the error and  the order of compact approximation \eqref{CML1} and  fourth-order expansion \eqref{TE5th}.
\begin{table}[ht]
    \caption{Error and order of approximation \eqref{CML1}  for the functions  $y(t)=e^t-t$ (left) with $\aaaa=0.5,x=2$, and $y(t)=\cos t,\aaaa=0.75,x=1$ (right).}
    \begin{subtable}{0.5\linewidth}
      \centering
  \begin{tabular}{l c c }
  \hline \hline
    $\boldsymbol{h}$ & $\mathbf{Error}$ & $\mathbf{Order}$  \\ 
		\hline \hline
$0.05$         &$0.000751014$            &$2.397394$\\
$0.025$        &$0.000138643$            &$2.437469$\\
$0.0125$       &$0.000025183$            &$2.460848$\\
$0.00625$      &$4.53\times 10^{-6}$     &$2.474854$\\
$0.003125$     &$8.10\times 10^{-7}$     &$2.483489$\\
		\hline
  \end{tabular}
    \end{subtable}%
    \begin{subtable}{.5\linewidth}
      \centering
				\quad
  \begin{tabular}{ l  c  c }
    \hline \hline
    $\boldsymbol{h}$ & $\mathbf{Error}$ &$\mathbf{Order}$  \\ \hline \hline
$0.05$     & $0.0002819560$         & $2.178218$ \\
$0.025$    & $0.0000607879$         & $2.213615$ \\
$0.0125$   & $0.0000129516$         & $2.230656$ \\
$0.00625$  & $2.74\times 10^{-6}$   & $2.239427$ \\
$0.003125$ & $5.79\times 10^{-7}$   & $2.244120$ \\   
\hline
  \end{tabular}
    \end{subtable} 
\end{table}
\begin{table}[ht]
    \caption{Error and order of approximation \eqref{TE5th}  for the functions  $y(t)=e^t$ and $\aaaa=0.75,x=2$ (left) and $y(t)=\sin t,\aaaa=0.25,x=1$ (right).}
    \begin{subtable}{0.5\linewidth}
      \centering
  \begin{tabular}{l c c }
  \hline \hline
    $\boldsymbol{h}$ & $\mathbf{Error}$ & $\mathbf{Order}$  \\ 
		\hline \hline
$0.05$         &$1.18\times 10^{-7}$      &$4.389900$\\
$0.025$        &$6.19\times 10^{-9}$      &$4.274558$\\
$0.0125$       &$3.35\times 10^{-10}$     &$4.185221$\\
$0.00625$      &$1.92\times 10^{-11}$     &$4.120378$\\
$0.003125$     &$1.14\times 10^{-12}$     &$4.076583$\\
		\hline
  \end{tabular}
    \end{subtable}%
    \begin{subtable}{.5\linewidth}
      \centering
				\quad
  \begin{tabular}{ l  c  c }
    \hline \hline
    $\boldsymbol{h}$ & $\mathbf{Error}$ &$\mathbf{Order}$  \\ \hline \hline
$0.05$     & $1.18\times 10^{-8}$    & $3.959934$ \\
$0.025$    & $7.55\times 10^{-10}$   & $3.966716$ \\
$0.0125$   & $4.81\times 10^{-11}$   & $3.972384$ \\
$0.00625$  & $3.06\times 10^{-12}$   & $3.977080$ \\
$0.003125$ & $1.94\times 10^{-13}$   & $3.980632$ \\       
\hline
  \end{tabular}
    \end{subtable} 
\end{table}
\section{Fourth-Order Expansion of the Trapezoidal Approximation  for the Fractional Integral}
In the present section we determine the fourth-order expansion \eqref{TE5th} of the trapezoidal approximation for the fractional integral
$$J^\aaaa y(x)=\int_0^x (x-t)^\aaaa y(t) dt,$$
where $0<\aaaa<1$. 
When the function $y$ is a differentiable function on the interval $[0,x]$, the trapezoidal approximation for the definite integral has second-order accuracy $O\llll(h^{2}\rrrr)$. From the Euler-MacLaurin formula the trapezoidal sum of the function $y$ has a  fourth-order asymptotic expansion
$$\mathcal{T}_h[y]=\int_0^x y(t)dt+\dddd{h^2}{12} \llll(y'(x)-y'(0)\rrrr)+O\llll(h^4 \rrrr).$$
The function $z(t)=(x-t)^\aaaa y(t)$ has a  singularity on  $[0,x]$, because its first derivative is unbounded at the point $t=x$. In Theorem 5 we determine the fourth-order expansion  of the trapezoidal approximation for the fractional integral $J^\aaaa y(x)$. The expansion formula contains the term $\zzzz(-\aaaa) y(x)h^{1+\aaaa}$. When $y(x)\neq 0$ the trapezoidal approximation for the definite integral of the function $z$  has accuracy $O\llll(h^{1+\aaaa}\rrrr)$ and   the trapezoidal approximation has second-order accuracy  if $y(x)=0$.  In Lemma 1, Lemma 2 and Lemma 3 we determine the fourth-order expansions of the trapezoidal approximations for the functions 
$$(x-t)^\aaaa,\quad (x-t)^{1+\aaaa},\quad (x-t)^{2+\aaaa},$$
 on the interval $[0,x]$. The proofs of the lemmas use the expansion formula approximation for the sum of the powers of the first $n$ integers
\begin{equation}\label{SPE2}
\sum_{k=1}^n k^\aaaa=\zzzz(-\aaaa)+\dddd{n^{1+\aaaa}}{1+\aaaa}+\dddd{n^\aaaa}{2}+\dddd{\aaaa}{12}n^{\aaaa-1}+O\llll(\dddd{1}{n^{3-\aaaa}}  \rrrr).
\end{equation}
\begin{lem} Let $y(t)=1$ and $z(t)=(x-t)^\aaaa$. Then
$$
\mathcal{T}_h[z]=J^\aaaa y(x)+\zzzz(-\aaaa)h^{1+\aaaa}+\dddd{\aaaa x^{\aaaa-1}}{12}h^2+O\llll(h^4 \rrrr).
$$
\end{lem}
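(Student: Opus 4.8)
The plan is to evaluate the trapezoidal sum of $z(t)=(x-t)^\alpha$ in closed form and match it against the target value. Since $y\equiv1$, one has $J^\alpha y(x)=\int_0^x(x-t)^\alpha\,dt=\tfrac{x^{1+\alpha}}{1+\alpha}$. Writing $x=nh$ and evaluating $z$ at the mesh points $x_k=kh$ gives $z(x_k)=(x-kh)^\alpha=h^\alpha(n-k)^\alpha$, together with the endpoint values $z(0)=h^\alpha n^\alpha$ and $z(x)=z(nh)=0$. Substituting these into $\mathcal{T}_h[z]$ and reindexing the interior sum by $j=n-k$ turns $\mathcal{T}_h[z]$ into $h^{1+\alpha}$ times a partial sum of $\alpha$-th powers of the first $n$ integers; after absorbing the half-weighted endpoint term one is left with an expression of the shape $h^{1+\alpha}\bigl(\sum_{j=1}^{n}j^\alpha-\tfrac12 n^\alpha\bigr)$.

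Next I would substitute the expansion formula \eqref{SPE2} for $\sum_{j=1}^{n}j^\alpha$. The $\tfrac12 n^\alpha$ term of \eqref{SPE2} is cancelled by the half-weighted endpoint correction, so what remains inside the parentheses is $\zeta(-\alpha)+\tfrac{n^{1+\alpha}}{1+\alpha}+\tfrac{\alpha}{12}n^{\alpha-1}+O\bigl(n^{\alpha-3}\bigr)$. Then I replace $n=x/h$ termwise and multiply through by $h^{1+\alpha}$: the term $\tfrac{n^{1+\alpha}}{1+\alpha}$ reproduces $\tfrac{x^{1+\alpha}}{1+\alpha}=J^\alpha y(x)$, the constant $\zeta(-\alpha)$ produces $\zeta(-\alpha)h^{1+\alpha}$, and $\tfrac{\alpha}{12}n^{\alpha-1}$ produces $\tfrac{\alpha x^{\alpha-1}}{12}h^2$, which are exactly the three asserted terms.

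Finally I would control the remainder: since $0<\alpha<1$ we have $3-\alpha>2$, hence $h^{1+\alpha}\cdot O\bigl(n^{\alpha-3}\bigr)=O\bigl(h^{1+\alpha}h^{3-\alpha}\bigr)=O(h^4)$, giving the stated error. I do not expect a genuine analytic obstacle here. The only point needing care is the bookkeeping of powers of $h$ — checking that each surviving term of \eqref{SPE2}, once multiplied by $h^{1+\alpha}$ with $n=x/h$, lands at precisely the claimed order, and that the first omitted term of \eqref{SPE2} is already $o(h^3)$. The apparent difficulty, that $z$ has an unbounded derivative at $t=x$ so the classical Euler--MacLaurin remainder estimate does not apply, is bypassed precisely by invoking \eqref{SPE2}, which already carries the non-integer power $h^{1+\alpha}$, in place of the smooth Euler--MacLaurin expansion.
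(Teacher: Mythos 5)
Your proposal is correct and follows essentially the same route as the paper: rewrite $\mathcal{T}_h[z]$ as $h^{1+\aaaa}\sum_{k=1}^{n}k^{\aaaa}-\tfrac12 x^{\aaaa}h$, substitute the expansion \eqref{SPE2} (whose $\tfrac12 n^{\aaaa}$ term cancels the endpoint correction), and absorb the remainder via $h^{1+\aaaa}/n^{3-\aaaa}=h^{4}/x^{3-\aaaa}=O\llll(h^{4}\rrrr)$. No gaps.
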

\begin{proof} The function $z$ has trapezoidal sum
$$\mathcal{T}_h[z]=h^{1+\aaaa}\sum_{k=1}^n k^\aaaa-\dddd{1}{2}x^\aaaa h,$$
and $J^\aaaa y(x)=\frac{x^{1+\aaaa}}{1+\aaaa}$. From  \eqref{SPE2}
$$\mathcal{T}_h[z]=\dddd{x^{1+\aaaa}}{1+\aaaa}+\dddd{1}{2}x^\aaaa h+\zzzz (-\aaaa) h^{1+\aaaa}+\dddd{\aaaa}{12}x^{\aaaa-1} h^2-\dddd{1}{2}x^\aaaa h+O\llll(\dddd{h^{1+\aaaa}}{n^{3-\aaaa}}  \rrrr).$$
We have that $h^{1+\aaaa}/n^{3-\aaaa}=h^{4}/(nh)^{3-\aaaa}=h^{4}/x^{3-\aaaa}$. Then
$$\mathcal{T}_h[z]=J^\aaaa y(x)+\zzzz (-\aaaa) h^{1+\aaaa}+\dddd{\aaaa}{12}x^{\aaaa-1} h^2+O\llll(h^4\rrrr).$$
\end{proof}
\begin{lem} Let $y(t)=(x-t)^\aaaa$ and $z(t)=(x-t)^{1+\aaaa}$. Then
$$\mathcal{T}_h[z]=J^\aaaa y(x)+\dddd{(1+\aaaa) x^{\aaaa}}{12}h^{2}+\zzzz(-1-\aaaa)h^{2+\aaaa}+O\llll(h^4 \rrrr).$$
\end{lem}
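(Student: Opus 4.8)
The plan is to run the argument of Lemma 1 with the exponent $\aaaa$ replaced by $1+\aaaa$ throughout. First I would put the trapezoidal sum of $z$ in closed form. Since $z(kh)=(x-kh)^{1+\aaaa}=((n-k)h)^{1+\aaaa}$, $z(0)=x^{1+\aaaa}$ and $z(x)=0$, the substitution $j=n-k$ in the interior sum gives
$$\mathcal{T}_h[z]=\dddd{h}{2}x^{1+\aaaa}+h^{2+\aaaa}\sum_{j=1}^{n-1}j^{1+\aaaa}=h^{2+\aaaa}\sum_{k=1}^{n}k^{1+\aaaa}-\dddd{x^{1+\aaaa}}{2}h,$$
the second equality using $h^{2+\aaaa}n^{1+\aaaa}=(nh)^{1+\aaaa}h=x^{1+\aaaa}h$. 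Also, with $y(t)=x-t$ so that $z(t)=(x-t)^{\aaaa}y(t)$, one has $J^{\aaaa}y(x)=\int_0^x (x-t)^{1+\aaaa}\,dt=\dddd{x^{2+\aaaa}}{2+\aaaa}$.

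Next I would apply the power-sum expansion \eqref{SPE2}, which holds equally well with $1+\aaaa$ in place of $\aaaa$:
$$\sum_{k=1}^{n}k^{1+\aaaa}=\zzzz(-1-\aaaa)+\dddd{n^{2+\aaaa}}{2+\aaaa}+\dddd{n^{1+\aaaa}}{2}+\dddd{1+\aaaa}{12}n^{\aaaa}+O\llll(\dddd{1}{n^{2-\aaaa}}\rrrr).$$
Multiplying by $h^{2+\aaaa}$ and using $nh=x$ converts the four explicit terms into $\zzzz(-1-\aaaa)h^{2+\aaaa}$, $\dddd{x^{2+\aaaa}}{2+\aaaa}$, $\dddd{x^{1+\aaaa}}{2}h$ and $\dddd{(1+\aaaa)x^{\aaaa}}{12}h^{2}$, while the tail becomes $h^{2+\aaaa}\,O\llll(n^{\aaaa-2}\rrrr)=O\llll(x^{\aaaa-2}h^{4}\rrrr)=O\llll(h^{4}\rrrr)$.

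Substituting this into the closed form for $\mathcal{T}_h[z]$, the term $\dddd{x^{1+\aaaa}}{2}h$ cancels the leftover $-\dddd{x^{1+\aaaa}}{2}h$, leaving
\begin{align*}
\mathcal{T}_h[z]&=\dddd{x^{2+\aaaa}}{2+\aaaa}+\dddd{(1+\aaaa)x^{\aaaa}}{12}h^{2}+\zzzz(-1-\aaaa)h^{2+\aaaa}+O\llll(h^{4}\rrrr)\\
&=J^{\aaaa}y(x)+\dddd{(1+\aaaa)x^{\aaaa}}{12}h^{2}+\zzzz(-1-\aaaa)h^{2+\aaaa}+O\llll(h^{4}\rrrr),
\end{align*}
which is the assertion. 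I expect the only step that needs care to be the handling of the remainder: one must verify that feeding $n=x/h$ into the $O(n^{\aaaa-2})$ tail of \eqref{SPE2} genuinely produces an $O(h^{4})$ error, so that no discarded Euler-MacLaurin term corrupts the $h^{2}$- or $h^{2+\aaaa}$-coefficient. This is precisely the mechanism already exploited in the proof of Lemma 1, only with the exponent shifted up by one, so no essentially new obstacle arises.
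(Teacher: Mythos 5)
Your proposal is correct and follows essentially the same route as the paper: express $\mathcal{T}_h[z]$ as $h^{2+\aaaa}\sum_{k=1}^{n}k^{1+\aaaa}-\tfrac{x^{1+\aaaa}}{2}h$, apply the power-sum expansion \eqref{SPE2} with exponent $1+\aaaa$, cancel the $\tfrac{x^{1+\aaaa}}{2}h$ terms, and absorb the tail via $h^{2+\aaaa}/n^{2-\aaaa}=h^{4}/x^{2-\aaaa}=O(h^{4})$. The only difference is that you spell out the derivation of the closed form of the trapezoidal sum, which the paper simply states.
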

\begin{proof}  From expansion \eqref{SPE2} 
$$\sum_{k=1}^n k^{1+\aaaa}=\zzzz(-1-\aaaa)+\dddd{n^{2+\aaaa}}{2+\aaaa}+\dddd{n^{1+\aaaa}}{2}+\dddd{1+\aaaa}{12}n^{\aaaa}+O\llll(\dddd{1}{n^{2-\aaaa}}  \rrrr).$$
We have that $J^\aaaa y(x)=\frac{x^{2+\aaaa}}{2+\aaaa}$ and
$$\mathcal{T}_h[z]=h^{2+\aaaa}\sum_{k=1}^n k^{1+\aaaa}-\dddd{x^{1+\aaaa}}{2} h.$$
Then
$$\mathcal{T}_n[z]=\dddd{x^{2+\aaaa}}{2+\aaaa}+\dddd{x^{1+\aaaa}}{2} h+\dddd{1+\aaaa}{12}x^{\aaaa} h^2+\zzzz (-1-\aaaa) h^{2+\aaaa}-\dddd{x^{1+\aaaa}}{2} h+O\llll(\dddd{h^{2+\aaaa}}{n^{2-\aaaa}}  \rrrr),$$
$$\mathcal{T}_h[z]=J^\aaaa y(x)+\dddd{1+\aaaa}{12}x^{\aaaa} h^2+\zzzz (-1-\aaaa) h^{2+\aaaa}+O\llll(h^4\rrrr),$$
because $h^{2+\aaaa}/n^{2-\aaaa}=h^{4}/(nh)^{2-\aaaa}=h^{4}/x^{2-\aaaa}$. 
\end{proof}
\begin{lem} Let $y(t)=(x-t)^{1+\aaaa}$ and $z(t)=(x-t)^{2+\aaaa}$. Then
$$T_h[z]=J^\aaaa y(x)+\dddd{(2+\aaaa) x^{1+\aaaa}}{12}h^{2}+\zzzz(-2-\aaaa)h^{3+\aaaa}+O\llll(h^4 \rrrr).$$
\end{lem}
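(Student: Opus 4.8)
The plan is to run the same argument used for Lemma 1 and Lemma 2, with every exponent raised by one. First I would put the trapezoidal sum of $z(t)=(x-t)^{2+\aaaa}$ into closed form: since $z(x)=0$, $z(0)=x^{2+\aaaa}$, and $z(kh)=(x-kh)^{2+\aaaa}=h^{2+\aaaa}(n-k)^{2+\aaaa}$, collecting the two endpoint contributions and reindexing $j=n-k$ gives
$$\mathcal{T}_h[z]=\dddd{h}{2}x^{2+\aaaa}+h^{3+\aaaa}\sum_{j=1}^{n-1}j^{2+\aaaa}=h^{3+\aaaa}\sum_{k=1}^{n}k^{2+\aaaa}-\dddd{x^{2+\aaaa}}{2}h,$$
using $h^{3+\aaaa}n^{2+\aaaa}=(nh)^{2+\aaaa}h=x^{2+\aaaa}h$ in the last step. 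In parallel, exactly as in Lemma 2, $J^\aaaa y(x)=\int_0^x z(t)\,dt=\dddd{x^{3+\aaaa}}{3+\aaaa}$.

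Next I would feed in the power-sum expansion \eqref{SPE2}, now applied with the exponent $2+\aaaa$ in place of $\aaaa$; for this exponent the first omitted Euler-MacLaurin term has order $n^{(2+\aaaa)-3}$, so
$$\sum_{k=1}^{n}k^{2+\aaaa}=\zzzz(-2-\aaaa)+\dddd{n^{3+\aaaa}}{3+\aaaa}+\dddd{n^{2+\aaaa}}{2}+\dddd{2+\aaaa}{12}n^{1+\aaaa}+O\llll(\dddd{1}{n^{1-\aaaa}}\rrrr).$$
Multiplying through by $h^{3+\aaaa}$ and replacing each $n^{s}$ by $(nh)^{s}h^{-s}=x^{s}h^{-s}$ would turn the leading term into $\dddd{x^{3+\aaaa}}{3+\aaaa}$, the three correction terms into $\zzzz(-2-\aaaa)h^{3+\aaaa}$, $\dddd{x^{2+\aaaa}}{2}h$, and $\dddd{(2+\aaaa)x^{1+\aaaa}}{12}h^{2}$, and the remainder into $O\llll(h^{3+\aaaa}/n^{1-\aaaa}\rrrr)=O\llll(h^{4}/x^{1-\aaaa}\rrrr)=O\llll(h^{4}\rrrr)$ for fixed $x$. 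Subtracting $\dddd{x^{2+\aaaa}}{2}h$, as demanded by the first display, cancels the two half-step terms and leaves
$$\mathcal{T}_h[z]=J^\aaaa y(x)+\dddd{(2+\aaaa)x^{1+\aaaa}}{12}h^{2}+\zzzz(-2-\aaaa)h^{3+\aaaa}+O\llll(h^{4}\rrrr),$$
which is the assertion.

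I do not expect any genuine obstacle: the computation is a shifted-index copy of the two preceding lemmas. The only places needing a word of care are that \eqref{SPE2} is still legitimate at the exponent $2+\aaaa$ (it is, since $-2-\aaaa$ is not a nonpositive integer, so $\zzzz(-2-\aaaa)$ is finite) and that the $O(n^{\aaaa-1})$ tail, after multiplication by $h^{3+\aaaa}$ and the substitution $n=x/h$, genuinely collapses to $O(h^{4})$ because $x$ stays fixed as $h\to0$; both points are immediate.
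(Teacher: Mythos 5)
Your argument is correct and follows the paper's proof essentially verbatim: write the trapezoidal sum in closed form as $h^{3+\aaaa}\sum_{k=1}^{n}k^{2+\aaaa}-\tfrac{x^{2+\aaaa}}{2}h$, apply the power-sum expansion \eqref{SPE2} with exponent $2+\aaaa$, and convert the remainder via $h^{3+\aaaa}/n^{1-\aaaa}=h^4/x^{1-\aaaa}$. The only additions (deriving the closed form explicitly and noting $\zzzz(-2-\aaaa)$ is finite) are harmless elaborations, not a different route.
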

\begin{proof}  From \eqref{SPE2} 
$$\sum_{k=1}^n k^{2+\aaaa}=\zzzz(-2-\aaaa)+\dddd{n^{3+\aaaa}}{3+\aaaa}+\dddd{n^{2+\aaaa}}{2}+\dddd{2+\aaaa}{12}n^{1+\aaaa}+O\llll(\dddd{1}{n^{1-\aaaa}}  \rrrr).$$
We have that $J^\aaaa y(x)=\frac{x^{3+\aaaa}}{3+\aaaa}$ and
$$\mathcal{T}_h[z]=h^{3+\aaaa}\sum_{k=1}^n k^{2+\aaaa}-\dddd{x^{2+\aaaa}}{2} h.$$
Then
$$\mathcal{T}_h[z]=\dddd{x^{3+\aaaa}}{3+\aaaa}+\dddd{x^{2+\aaaa}}{2} h+\dddd{2+\aaaa}{12}x^{1+\aaaa} h^2+\zzzz (-2-\aaaa) h^{2+\aaaa}-\dddd{x^{2+\aaaa}}{2} h+O\llll(\dddd{h^{3+\aaaa}}{n^{1-\aaaa}}  \rrrr),$$
$$\mathcal{T}_n[z]=J^\aaaa y(x)+\dddd{2+\aaaa}{12}x^{1+\aaaa} h^2+\zzzz (-2-\aaaa) h^{3+\aaaa}+O\llll(h^4\rrrr),$$
because $h^{3+\aaaa}/n^{1-\aaaa}=h^{4}/(n h)^{1-\aaaa}=h^{4}/x^{1-\aaaa}$. 
\end{proof}
In the next theorem we determine the fourth-order expansion of the trapezoidal approximation for the fractional integral $J^\aaaa y(x)$, when the function $y(x)$ is a polynomial.
\begin{thm} Let $y(t)$ be a polynomial and $z(t)=(x-t)^\aaaa y(t)$. Then
\begin{align*}
\mathcal{T}_h[z]=J^\aaaa y(x)+y(x)\zzzz(-\aaaa)h^{1+\aaaa}-\dddd{z'(0)}{12}&h^2-y'(x)\zzzz(-1-\aaaa)h^{2+\aaaa}\\
&+\dddd{y''(x)}{2}\zzzz(-2-\aaaa)h^{3+\aaaa}+O\llll(h^4 \rrrr).\nonumber
\end{align*}
\end{thm}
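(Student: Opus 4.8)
The plan is to reduce the theorem to the three preceding lemmas by expanding the polynomial $y$ about the right endpoint $x$. Writing $m=\deg y$ and using the Taylor expansion of $y$ in powers of $x-t$, which is exact because $y$ is a polynomial,
$$y(t)=\sum_{j=0}^{m}\dddd{(-1)^j y^{(j)}(x)}{j!}(x-t)^j,\qquad z(t)=(x-t)^\aaaa y(t)=\sum_{j=0}^{m}\dddd{(-1)^j y^{(j)}(x)}{j!}(x-t)^{j+\aaaa}.$$
Since the trapezoidal sum $\mathcal{T}_h$ and the fractional integral $J^\aaaa$ are both linear, it suffices to expand $\mathcal{T}_h\bigl[(x-t)^{j+\aaaa}\bigr]$ for each $j$ and recombine with the coefficients $(-1)^j y^{(j)}(x)/j!$; the constant parts then recombine into $J^\aaaa y(x)=\sum_{j=0}^{m}\frac{(-1)^j y^{(j)}(x)}{j!}\frac{x^{j+\aaaa+1}}{j+\aaaa+1}$.

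The first step is a single ``power'' estimate whose cases $\bbbb=\aaaa,\,1+\aaaa,\,2+\aaaa$ are exactly Lemmas~1, 2 and 3: for every real $\bbbb>0$,
$$\mathcal{T}_h\bigl[(x-t)^{\bbbb}\bigr]=\dddd{x^{\bbbb+1}}{\bbbb+1}+\zzzz(-\bbbb)h^{1+\bbbb}+\dddd{\bbbb\,x^{\bbbb-1}}{12}h^2+O\llll(h^4\rrrr).$$
Its proof is the one used for Lemma~1: reindexing $k\mapsto n-k$ and using that $(x-t)^\bbbb$ vanishes at $t=x$ give $\mathcal{T}_h\bigl[(x-t)^\bbbb\bigr]=h^{1+\bbbb}\bigl(\sum_{k=1}^{n}k^\bbbb-\tfrac12 n^\bbbb\bigr)$, after which the expansion formula \eqref{SPE2} (valid for an arbitrary positive exponent) together with $h^{1+\bbbb}n^{\bbbb-3}=h^4 x^{\bbbb-3}$ gives the claim. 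Taking $\bbbb=j+\aaaa$ and collecting powers of $h$ in the sum over $j$: the constant terms give $J^\aaaa y(x)$; among the fractional terms $\zzzz(-j-\aaaa)h^{1+j+\aaaa}$ only $j=0,1,2$ have exponent below $4$, and since $(-1)^j/j!$ equals $1,-1,\tfrac12$ there they contribute $y(x)\zzzz(-\aaaa)h^{1+\aaaa}-y'(x)\zzzz(-1-\aaaa)h^{2+\aaaa}+\tfrac12 y''(x)\zzzz(-2-\aaaa)h^{3+\aaaa}$, the terms with $j\ge3$ being $O(h^{4+\aaaa})$; and the $h^2$ terms add up to $\dddd{h^2}{12}\sum_{j=0}^{m}\frac{(-1)^j y^{(j)}(x)}{j!}(j+\aaaa)x^{j+\aaaa-1}$. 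Differentiating the expansion of $z$ term by term at $t=0$ shows this last sum equals $-z'(0)$, so the $h^2$ contribution is $-\frac{z'(0)}{12}h^2$, which is the remaining term of the asserted expansion.

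I expect the work to be bookkeeping rather than analysis. The point to check carefully is that each of the finitely many remainders is genuinely $O(h^4)$: for $j\ge3$ the error in \eqref{SPE2} is $O(n^{\bbbb-3})$ with $\bbbb=j+\aaaa>3$, and it must be converted to $O(h^4)$ using $nh=x$, exactly as in the power estimate. The other thing to verify is the elementary identity $\sum_{j}\frac{(-1)^j y^{(j)}(x)}{j!}(j+\aaaa)x^{j+\aaaa-1}=-z'(0)$, immediate from differentiating the finite expansion $z(t)=\sum_j \frac{(-1)^j y^{(j)}(x)}{j!}(x-t)^{j+\aaaa}$ at $t=0$. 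No analytic difficulty beyond what already appears in Lemmas~1--3 occurs; in particular the endpoint singularity of $z$ at $t=x$ is absorbed, as there, entirely into \eqref{SPE2}.
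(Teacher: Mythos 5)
Your argument is correct, but it is organized differently from the paper's. The paper also Taylor-expands $y$ at $t=x$, but then splits $y=y_1+y_2$ with $y_1$ the quadratic part $p_0+p_1(x-t)+p_2(x-t)^2$ and $y_2$ the part of degree $\ge 3$ in $(x-t)$: the three singular monomials in $z_1=(x-t)^\aaaa y_1$ are handled by Lemmas 1--3 exactly as you do, while $z_2=(x-t)^\aaaa y_2$, which is $C^3$ with bounded third derivative and $z_2'(x)=0$, is dispatched in one stroke by the classical Euler--MacLaurin formula, $\mathcal{T}_h[z_2]=J^\aaaa y_2(x)-\tfrac{h^2}{12}z_2'(0)+O(h^4)$; the two $h^2$ contributions are then recombined through the same identity for $z'(0)$ that you use. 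You instead treat every monomial $(x-t)^{j+\aaaa}$ uniformly via a generalized power lemma, which requires the truncated expansion \eqref{SPE2} with error $O(n^{\bbbb-3})$ for arbitrary $\bbbb=j+\aaaa>0$, i.e.\ beyond the exponents $\aaaa,\,1+\aaaa,\,2+\aaaa$ for which the paper invokes it; this extension is standard (it is the truncation of \eqref{Expansion1} after the $B_2$ term, with $B_3=0$ giving the $n^{\bbbb-3}$ error), but it should be stated explicitly, and note that the implied constant depends on $j$, which is harmless only because the degree $m$ is fixed. What your route buys is that you never need the $O(h^4)$ form of Euler--MacLaurin for $z_2$, whose fourth derivative is unbounded at $t=x$ (a point the paper passes over lightly); what the paper's route buys is that it never uses the power-sum asymptotics outside the range already covered by Lemmas 1--3. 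Your bookkeeping of the coefficients ($J^\aaaa y(x)$ from the constant terms, $p_0,p_1,p_2=y(x),-y'(x),\tfrac12 y''(x)$ for the zeta terms, and $-z'(0)/12$ for the $h^2$ terms) matches the paper's and is correct.
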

\begin{proof} Let $y(t)$ be a polynomial of degree $m$. From Taylor expansion of the function $y(t)$ at the point $t=x$
$$y(t)=\sum_{k=0}^m p_k (x-t)^k=p_0+p_1 (x-t)+p_2 (x-t)^2+ \sum_{k=3}^m p_k (x-t)^k.$$
The coefficients $p_k$ are expressed with the values of the derivatives of the function $y(t)$ at the point $t=x$, 
\begin{equation}\label{coefs}
p_k=(-1)^k y^{(k)}(x)/k!.
\end{equation}
 Denote
$$y_1(t)=p_0+p_1 (x-t)+p_2 (x-t)^2,\quad y_2(t)= \sum_{k=3}^m p_k (x-t)^k,$$
$$z_1(t)=(x-t)^\aaaa y_1(t),\quad z_2(t)=(x-t)^\aaaa y_2(t).$$
Then
$$y(t)=y_1(t)+y_2(t),\quad z(t)=z_1(t)+z_2(t).$$
 The function $z(t)$ has a  first derivative  at the point $t=0$,
$$z(t)=\sum_{k=0}^m p_k (x-t)^{\aaaa+k}=p_0 (x-t)^{\aaaa}+p_1 (x-t)^{1+\aaaa}+p_2 (x-t)^{2+\aaaa}+ z_2(t),$$
$$z'(t)=-\aaaa p_0 (x-t)^{\aaaa-1}-(1+\aaaa)p_1 (x-t)^{\aaaa}-(2+\aaaa)p_2 (x-t)^{1+\aaaa}+ z_2'(t),$$
\begin{equation}\label{Z0}
z'(0)=-\aaaa p_0 x^{\aaaa-1}-(1+\aaaa)p_1 x^{\aaaa}-(2+\aaaa)p_2 x^{\aaaa+1}+ z_2'(0).
\end{equation}
The function $z_2(t)$ has a continuous and bounded third derivative on $[0,x]$ and $z_2'(x)=0$. From the Euler-MacLaurin formula, the trapezoidal sum of the function $z_2(t)$ has a fourth-order expansion
\begin{equation}\label{Z2}
\mathcal{T}_h[z_2]=J^\aaaa y_2(x)-\dddd{h^2}{12}z_2'(0)+O\llll( h^4 \rrrr).
\end{equation}
 Now we determine the fourth-order expansion for the trapezoidal sum of the function $z_1(t)$. From  Lemma 1, Lemma 2 and Lemma 3, 
$$\mathcal{T}_h[z_1]=p_0 \mathcal{T}_h[(x-t)^\aaaa]+p_1 \mathcal{T}_h[(x-t)^{1+\aaaa}]+p_2 \mathcal{T}_h[(x-t)^{2+\aaaa}],$$
\begin{align}\label{Z1}
\mathcal{T}_h[z_1]=p_0 &\llll( \dddd{x^{1+\aaaa}}{1+\aaaa}+\zzzz(-\aaaa)h^{1+\aaaa}+\dddd{\aaaa x^{\aaaa-1}}{12}h^2 \rrrr)\\
&+p_1 \llll( \dddd{x^{2+\aaaa}}{2+\aaaa}+\dddd{(1+\aaaa) x^{\aaaa}}{12}h^{2}+\zzzz(-1-\aaaa)h^{2+\aaaa} \rrrr)\nonumber\\
&+p_2\llll( \dddd{x^{3+\aaaa}}{3+\aaaa}+\dddd{(2+\aaaa) x^{1+\aaaa}}{12}h^{2}+\zzzz(-2-\aaaa)h^{3+\aaaa}\rrrr)+O\llll(h^4 \rrrr).\nonumber
\end{align}
From \eqref{Z0}, \eqref{Z2} and \eqref{Z1} we obtain
\begin{align*}
\mathcal{T}_h[z]=J^\aaaa y(x)+&p_0\zzzz(-\aaaa)h^{1+\aaaa}-\dddd{z'(0)}{12}h^2\\
&+p_1\zzzz(-1-\aaaa)h^{2+\aaaa}+p_2\zzzz(-2-\aaaa)h^{3+\aaaa}+O\llll(h^4 \rrrr).
\end{align*}
From the formula \eqref{coefs} for the coefficients of $y(t)$,
$$p_0=y(x),\quad p_1=-y'(x),\quad p_2=\dddd{y''(x)}{2}.$$
Then
\begin{align*}
\mathcal{T}_h[z]=J^\aaaa y(x)+&y(x)\zzzz(-\aaaa)h^{1+\aaaa}-\dddd{z'(0)}{12}h^2\\
&-y'(x)\zzzz(-1-\aaaa)h^{2+\aaaa}+\dddd{y''(x)}{2}\zzzz(-2-\aaaa)h^{3+\aaaa}+O\llll(h^4 \rrrr).
\end{align*}
\end{proof}
Now we extend the result from Theorem 4 to any sufficiently differentiable function $y$ on the interval $[0,x]$. By the Weierstrass Approximation Theorem every continuous function on the interval $[0,x]$ is uniformly approximated by polynomials:  for every  $\eeee>0$ there exists a polynomial $p(t)$ such that
$$|y(t)-p(t)|<\eeee.$$
We can show that if $y(t)$ is a sufficiently differentiable function, there  exists a polynomial $p(t)$ such that
\begin{equation} \label{pe}
|y(t)-p(t)|<\eeee,\quad |y'(t)-p'(t)|<\eeee,\quad |y''(t)-p''(t)|<\eeee,
\end{equation}
for all $t\in [0,x]$. The proof is similar to the proof of Theorem 8 in \cite{Dimitrov2015_1}.
\begin{thm} Let $y(t)$ be a sufficiently differentiable function. Then
\begin{align*}
h^{1+\aaaa}\sum_{k=0}^n k^{\aaaa}y(x-&kh)=J^{\aaaa}y(x)+\dddd{y(0)x^\aaaa}{2}h+\zzzz(-\aaaa) y(x)h^{1+\aaaa}\\
&+\dddd{\aaaa x^{\aaaa-1}y(0)-x^\aaaa y'(0)}{12}h^2-y'(x)\zzzz(-1-\aaaa)h^{2+\aaaa}\\
&+\dddd{y''(x)}{2}\zzzz(-2-\aaaa)h^{3+\aaaa}+O\llll(h^4 \rrrr).
\end{align*}
\end{thm}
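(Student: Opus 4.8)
The first step is to recast the left-hand side as a trapezoidal sum of the singular integrand $z(t)=(x-t)^{\aaaa}y(t)$. Since $x=nh$, for each node $x_k=kh$ we have $x-x_k=(n-k)h$; re-indexing the sum by $j=n-k$ and using $(n-j)^{\aaaa}=h^{-\aaaa}(x-x_j)^{\aaaa}$ together with $z(x_j)=(x-x_j)^{\aaaa}y(x_j)$ gives
\begin{equation*}
h^{1+\aaaa}\sum_{k=0}^{n}k^{\aaaa}y(x-kh)=h\sum_{j=0}^{n}z(x_j)=\mathcal{T}_h[z]+\dddd{x^{\aaaa}y(0)}{2}h,
\end{equation*}
where the last equality uses $z(0)=x^{\aaaa}y(0)$ and $z(x)=0$. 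Hence it suffices to establish the expansion of $\mathcal{T}_h[z]$ asserted in Theorem 4, now for a sufficiently differentiable $y$ in place of a polynomial; one also checks directly that $z'(0)=-\aaaa x^{\aaaa-1}y(0)+x^{\aaaa}y'(0)$, so that the coefficient $-z'(0)/12$ of $h^2$ in Theorem 4 equals $\bigl(\aaaa x^{\aaaa-1}y(0)-x^{\aaaa}y'(0)\bigr)/12$.

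I would then run the proof of Theorem 4 almost verbatim, replacing the polynomial tail by a Taylor remainder. Write $y=y_1+y_2$, where $y_1(t)=p_0+p_1(x-t)+p_2(x-t)^2$ is the quadratic Taylor polynomial of $y$ at $t=x$, so $p_0=y(x)$, $p_1=-y'(x)$, $p_2=y''(x)/2$, and $y_2=y-y_1$; set $z_1(t)=(x-t)^{\aaaa}y_1(t)$ and $z_2(t)=(x-t)^{\aaaa}y_2(t)$, so that $z=z_1+z_2$ and $\mathcal{T}_h[z]=\mathcal{T}_h[z_1]+\mathcal{T}_h[z_2]$. The term $\mathcal{T}_h[z_1]$ is a combination, with coefficients $p_0,p_1,p_2$, of the expansions of $\mathcal{T}_h[(x-t)^{\aaaa}]$, $\mathcal{T}_h[(x-t)^{1+\aaaa}]$, $\mathcal{T}_h[(x-t)^{2+\aaaa}]$ supplied by Lemma 1, Lemma 2 and Lemma 3; collecting terms, using $x^{1+\aaaa}/(1+\aaaa)=J^{\aaaa}1(x)$ and the analogous identities together with $\aaaa p_0 x^{\aaaa-1}+(1+\aaaa)p_1 x^{\aaaa}+(2+\aaaa)p_2 x^{1+\aaaa}=-z_1'(0)$, reproduces the asserted expansion with $y$ replaced by $y_1$. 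For $\mathcal{T}_h[z_2]$, the point is that $y_2$ vanishes to third order at $t=x$, so $z_2$ has a continuous third derivative on $[0,x]$ with $z_2'(x)=z_2'''(x)=0$ and $z_2^{(4)}(t)=O\bigl((x-t)^{\aaaa-1}\bigr)\in L^1[0,x]$; the Euler-MacLaurin formula with two correction terms then gives
\begin{equation*}
\mathcal{T}_h[z_2]=\int_0^x z_2(t)\,dt+\dddd{h^2}{12}\bigl(z_2'(x)-z_2'(0)\bigr)+O(h^4)=J^{\aaaa}y_2(x)-\dddd{z_2'(0)}{12}h^2+O(h^4),
\end{equation*}
exactly as in the proof of Theorem 4. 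Adding the two expansions and using $J^{\aaaa}y_1(x)+J^{\aaaa}y_2(x)=J^{\aaaa}y(x)$ and $z_1'(0)+z_2'(0)=z'(0)$ yields the expansion of $\mathcal{T}_h[z]$, and the identity of the first paragraph finishes the proof.

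The main obstacle is the regularity bookkeeping for $z_2(t)=(x-t)^{\aaaa}y_2(t)$ near the singular endpoint $t=x$: differentiating produces terms of the form $(x-t)^{\aaaa-i}y_2^{(j-i)}(t)$, and one must verify that the loss of $i$ powers of $(x-t)$ is always absorbed by the vanishing of $y_2$ to order $3$ at $t=x$, so that $z_2,z_2',z_2'',z_2'''$ extend continuously to $[0,x]$ with $z_2'(x)=z_2'''(x)=0$ while $z_2^{(4)}$ is only mildly singular, $z_2^{(4)}(t)=O\bigl((x-t)^{\aaaa-1}\bigr)$, hence in $L^1[0,x]$; this integrability is precisely what licenses the $O(h^4)$ remainder in the Euler-MacLaurin formula, and is the only place the hypothesis that $y$ be ``sufficiently differentiable'' (a $C^4$-type condition) enters. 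This Taylor-remainder route makes no use of the density argument indicated after the statement; alternatively one could follow that route, approximating $y$ by a polynomial $p$ with \eqref{pe}, applying Theorem 4 to $p$, and estimating the contribution of $y-p$ both to the sum and to each term of the expansion.
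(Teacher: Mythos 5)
Your proposal is correct, but it takes a genuinely different route from the paper's proof of this theorem. The paper argues by density: it chooses a polynomial $p$ approximating $y$, $y'$, $y''$ uniformly within $\epsilon$ (condition \eqref{pe}), applies Theorem 4 to $p$, shows $|\mathcal{Q}_h[y]-\mathcal{Q}_h[p]|$, $|J^{\alpha}y(x)-J^{\alpha}p(x)|$ and $|\mathcal{R}_h[y]-\mathcal{R}_h[p]|$ are all $O(\epsilon)$, and then lets $\epsilon\rightarrow 0$ in $|\mathcal{R}_h[y]-\mathcal{Q}_h[y]|=O(\epsilon+h^4)$. You instead rerun the proof of Theorem 4 directly on $y$, splitting off the quadratic Taylor polynomial of $y$ at $t=x$ and handling the remainder $z_2(t)=(x-t)^{\alpha}y_2(t)$ by Euler--MacLaurin. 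The trade-off is clear: your route dispenses with the auxiliary simultaneous-approximation statement \eqref{pe} and with the $\epsilon$-bookkeeping, and it produces an $O(h^4)$ constant controlled explicitly by bounds on the first four derivatives of $y$ --- a real advantage, since in the density argument the constant in the $O(h^4)$ of Theorem 4 depends on the polynomial $p$, hence on $\epsilon$, so the interplay of $\epsilon$ and $h$ requires some care. The price is the endpoint regularity analysis, which you carry out correctly: $y_2$ vanishes to third order at $t=x$, so $z_2'$, $z_2''$, $z_2'''$ extend continuously to $[0,x]$ with $z_2'(x)=0$, while $z_2^{(4)}(t)=O\bigl((x-t)^{\alpha-1}\bigr)$ is merely integrable, and the Euler--MacLaurin formula written with the periodized $B_4$ kernel (valid when $z_2'''$ is absolutely continuous with $z_2^{(4)}\in L^1[0,x]$) gives the $O(h^4)$ remainder in your analogue of \eqref{Z2}. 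Note that this is exactly the justification the paper itself needs for \eqref{Z2} in the proof of Theorem 4, where its own $z_2^{(4)}$ also behaves like $(x-t)^{\alpha-1}$, so your argument stays entirely within the paper's toolkit; the hypothesis you use, $y\in C^4[0,x]$, is what ``sufficiently differentiable'' means here.
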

\begin{proof} Let $z(t)=(x-t)^\aaaa y(t)$. Then
$$z'(t)=-\aaaa (x-t)^{\aaaa-1}y(t)+(x-t)^\aaaa y'(t),$$
$$z'(0)=-\aaaa x^{\aaaa-1}y(0)+x^\aaaa y'(0).$$
The function $z$ has a trapezoidal sum
$$\mathcal{T}_h[z]=h^{1+\aaaa}\sum_{k=0}^n k^\aaaa y(x-k h)-\dddd{x^\aaaa y(0)}{2}h.$$
Denote
$$\mathcal{Q}_h[y]=h^{1+\aaaa}\sum_{k=0}^n k^\aaaa y(x-k h),$$
\begin{align*}
\mathcal{R}_h[y]=J^{\aaaa}y(x)+&\dddd{y(0)x^\aaaa}{2}h+\zzzz(-\aaaa) y(x)h^{1+\aaaa}\\
&+\dddd{\aaaa x^{\aaaa-1}y(0)-x^\aaaa y'(0)}{12}h^2-y'(x)\zzzz(-1-\aaaa)h^{2+\aaaa}\\
&+\dddd{y''(x)}{2}\zzzz(-2-\aaaa)h^{3+\aaaa}.
\end{align*}
Let $\eeee>0$ and $p(t)$ be a polynomial which satisfies \eqref{pe}. From Theorem 4
\begin{equation}\label{F1}
\mathcal{Q}_h[p]-\mathcal{R}_h[p]=O\llll(h^4 \rrrr).
\end{equation}
From the expansion formula for the sum of powers \eqref{Expansion1},
\begin{equation}\label{SPApprox}
\sum_{k=0}^n k^\aaaa=\dddd{n^{1+\aaaa}}{1+\aaaa}+O\llll(n^\aaaa \rrrr) =O\llll(n^{1+\aaaa} \rrrr).
\end{equation}
$\bullet$ {\it Now we estimate} $\mathcal{Q}_h[y]-\mathcal{Q}_h[p]$.
$$ \llll|\mathcal{Q}_h[y]-\mathcal{Q}_h[p]\rrrr|=\llll| h^{1+\aaaa}\sum_{k=0}^n k^\aaaa (y(x-k h) -p(x-k h))  \rrrr|\leq \eeee h^{1+\aaaa}\sum_{k=0}^n k^\aaaa.$$
Then
\begin{equation*}
 |\mathcal{Q}_h[y]-\mathcal{Q}_h[p]|\leq \eeee h^{1+\aaaa}\sum_{k=0}^n k^\aaaa = \eeee h^{1+\aaaa}\llll( \dddd{n^{1+\aaaa}}{1+\aaaa}+O\llll(n^\aaaa \rrrr)  \rrrr),
\end{equation*}
$$ |\mathcal{Q}_h[y]-\mathcal{Q}_h[p]|\leq \eeee  \dddd{x^{1+\aaaa}}{1+\aaaa}+\eeee O\llll(x^\aaaa h\rrrr),  $$
\begin{equation}\label{F2}
 |\mathcal{Q}_h[y]-\mathcal{Q}_h[p]|= O\llll(\eeee\rrrr)  .
\end{equation}
$\bullet$ {\it Estimate for} $J^\aaaa y(x)-J^\aaaa p(x)$.
$$\llll|J^\aaaa y(x)-J^\aaaa p(x)\rrrr|=\llll|\int_0^x (x-t)^\aaaa (y(t)-p(t))dt\rrrr|\leq \int_0^x (x-t)^\aaaa |y(t)-p(t)|dt,$$
$$\llll|J^\aaaa y(x)-J^\aaaa p(x)\rrrr|\leq \eeee \int_0^x (x-t)^\aaaa dt=-\eeee\llll. \dddd{(x-t)^{1+\aaaa}}{1+\aaaa}\rrrr|_0^x,$$
$$\llll|J^\aaaa y(x)-J^\aaaa p(x)\rrrr|\leq   \llll(\dddd{x^{1+\aaaa}}{1+\aaaa}\rrrr) \eeee. $$
$\bullet$ {\it Estimate for} $\mathcal{R}_h[y]-\mathcal{R}_h[p].$
\begin{align*}
|\mathcal{R}_h[y]-\mathcal{R}_h &[p]|\leq \llll|J^\aaaa y(x)-J^\aaaa p(x)\rrrr|+\llll|y(0)-p(0)\rrrr| \dddd{x^\aaaa h}{2}\\
& + |y(x)-p(x)|\zzzz(-\aaaa)h^{1+\aaaa}+\llll|y'(x)-p'(x)\rrrr|\zzzz(-1-\aaaa)h^{2+\aaaa}\\
&+\llll(\aaaa x^{\aaaa-1}|y(0)-p(0)|+x^\aaaa |y'(0)-p'(0)|\rrrr)\dddd{h^2}{12}\\
&+|y''(x)-p''(x)|\dddd{\zzzz(-2-\aaaa)}{2}h^{3+\aaaa},
\end{align*}
\begin{align*}
|\mathcal{R}_h[y]-\mathcal{R}_h[p]| \leq \eeee \bigg( \dddd{x^{1+\aaaa}}{1+\aaaa}+& \dddd{x^\aaaa h}{2}+\zzzz(-\aaaa)h^{1+\aaaa}+\zzzz(-1-\aaaa)h^{2+\aaaa}\nonumber\\
&+\dddd{\zzzz(-2-\aaaa)h^{3+\aaaa}}{2}+\aaaa x^{\aaaa-1}\dddd{h^2}{12}+x^\aaaa\dddd{h^2}{12} \bigg),
\end{align*}
\begin{align}\label{F3}
|\mathcal{R}_h[y]-\mathcal{R}_h[p]|=O(\eeee).
\end{align}
$\bullet$ {\it Estimate for}  $\mathcal{R}_h[y]-\mathcal{Q}_h[y]$.
$$|\mathcal{R}_h[y]-\mathcal{Q}_h[y]|\leq |\mathcal{R}_h[y]-\mathcal{R}_h[p]|+|\mathcal{R}_h[p]-\mathcal{Q}_h[p]|+|\mathcal{Q}_h[p]-\mathcal{Q}_h[y]|.
$$
From \eqref{F1},\eqref{F2} and \eqref{F3} we obtain
$$|\mathcal{R}_h[y]-\mathcal{Q}_h[y]|\leq O(\eeee)+O\llll(h^4 \rrrr)+O(\eeee)=O\llll(\eeee+h^4 \rrrr).
$$
By letting $\eeee \rightarrow 0$ we obtain
$$|\mathcal{R}_h[y]-\mathcal{Q}_h[y]|=O\llll(h^4 \rrrr).
$$
\end{proof}
The Caputo derivative of the function $y$ is expressed \eqref{CDF} as the  composition of the fractional integral $J^{1-\aaaa}$ and the second derivative of the function $y$. The expansion formula \eqref{J1a} for the trapezoidal approximation of the fractional integral $J^{1-\aaaa}$ is a direct consequence of Theorem 5. 
In Lemma 10 we use \eqref{J1a}  to derive the fourth-order expansion of the $L1$ approximation for the Caputo derivative.
\begin{cor} Let $y(t)$ be a sufficiently differentiable function. Then
\begin{align}\label{J1a}
h^{2-\aaaa}\sum_{k=0}^{n-1} k^{1-\aaaa} &y(x-kh)=J^{1-\aaaa}y(x)-\dddd{y(0)x^{1-\aaaa}h}{2}+\zzzz(\aaaa-1) y(x)h^{2-\aaaa}\nonumber\\
&+\dddd{(1-\aaaa) x^{-\aaaa}y(0)-x^{1-\aaaa} y'(0)}{12}h^2-y'(x)\zzzz(\aaaa-2)h^{3-\aaaa}\nonumber\\
&+\dddd{y''(x)}{2}\zzzz(\aaaa-3)h^{4-\aaaa}+O\llll(h^4 \rrrr).
\end{align}
\end{cor}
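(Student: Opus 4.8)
The plan is to deduce \eqref{J1a} directly from Theorem 5 by applying that theorem with the order $1-\aaaa$ in place of $\aaaa$, and then correcting for the fact that the sum in \eqref{J1a} runs only to $k=n-1$ whereas the sum in Theorem 5 runs to $k=n$. The substitution is legitimate because $0<1-\aaaa<1$, so $1-\aaaa$ lies in the same range for which Theorem 5 was established, and the error term $O\llll(h^4\rrrr)$ there is valid for every fixed order in $(0,1)$.

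First I would rewrite the expansion of Theorem 5 with $1-\aaaa$ substituted for $\aaaa$. Under this substitution $1+\aaaa$ becomes $2-\aaaa$, the power $k^{\aaaa}$ becomes $k^{1-\aaaa}$, $J^{\aaaa}y(x)$ becomes $J^{1-\aaaa}y(x)$, the coefficient $\aaaa x^{\aaaa-1}$ becomes $(1-\aaaa)x^{-\aaaa}$, and the zeta values $\zzzz(-\aaaa),\zzzz(-1-\aaaa),\zzzz(-2-\aaaa)$ become $\zzzz(\aaaa-1),\zzzz(\aaaa-2),\zzzz(\aaaa-3)$ respectively. This gives an expansion of $h^{2-\aaaa}\sum_{k=0}^{n}k^{1-\aaaa}y(x-kh)$ whose terms already coincide with the right-hand side of \eqref{J1a}, with the single exception of the order-$h$ term, which at this point is $+\tfrac12 y(0)x^{1-\aaaa}h$ rather than $-\tfrac12 y(0)x^{1-\aaaa}h$.

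Next I would account for the change of summation range. The $k=0$ term vanishes because $1-\aaaa>0$, and since $nh=x$ the $k=n$ term equals $h^{2-\aaaa}n^{1-\aaaa}y(x-nh)=(hn)^{1-\aaaa}y(0)\,h=x^{1-\aaaa}y(0)\,h$. Subtracting this term converts $\sum_{k=0}^{n}$ into $\sum_{k=0}^{n-1}$ and replaces the order-$h$ contribution $\tfrac12 y(0)x^{1-\aaaa}h$ by $\tfrac12 y(0)x^{1-\aaaa}h-y(0)x^{1-\aaaa}h=-\tfrac12 y(0)x^{1-\aaaa}h$, exactly the coefficient appearing in \eqref{J1a}; every other term is unchanged and the error stays $O\llll(h^4\rrrr)$. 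This proves the corollary. There is no real obstacle here: the only steps requiring care are checking that the three zeta arguments transform as indicated and bookkeeping the single endpoint term, which is the sole source of the sign change in the $h$-coefficient relative to Theorem 5.
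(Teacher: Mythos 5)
Your proposal is correct and matches the paper's route: the paper states Corollary 6 as a direct consequence of Theorem 5 (substituting $1-\aaaa$ for $\aaaa$, which is admissible since $0<1-\aaaa<1$), and your careful bookkeeping of the endpoint term $h^{2-\aaaa}n^{1-\aaaa}y(0)=x^{1-\aaaa}y(0)h$, which converts the sum to $\sum_{k=0}^{n-1}$ and flips the order-$h$ coefficient from $+\tfrac12 y(0)x^{1-\aaaa}h$ to $-\tfrac12 y(0)x^{1-\aaaa}h$, is exactly the verification the paper leaves implicit.
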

\section{Expansion of the L1 Approximation for the Caputo Derivative}
In \cite{Dimitrov2015_1} we determined the second-order expansion \eqref{2ndL1} of the $L1$ approximation for the Caputo derivative. In this section we extend the method of \cite{Dimitrov2015_1} to determine the fourth-order expansion \eqref{4thL1} of the $L1$ approximation and the three-point compact approximation \eqref{CML1} for the Caputo derivative. 
Denote by $\mathcal{L}_{h}[y]$ the non-constant part of the $L1$ approximation
$$\mathcal{L}_{h}[y]=\dddd{1}{h^\aaaa} \sum_{k=0}^{N} \ssss_k^{(\alpha)} y_{N-k}. $$
Let $\Delta_h^1 y_n$ and $\Delta_h^2 y_n$ be the forward difference and the central difference of the function $y(x)$ at the point $x_n=n h,$
$$\Delta_h^1 y_n=y_{n+1}-y_n,\quad \Delta_h^2 y_n=y_{n+1}-2y_n+y_{n-1}.$$
In Lemma 4 and Lemma 5 of \cite{Dimitrov2015_1} we determined the representation of $\mathcal{L}_{h}[y]$,
\begin{equation}\label{Representation}
h^\aaaa \mathcal{L}_{h}[y]=\sum_{k=1}^{N-1}k^{1-\aaaa}\Delta_h^2  y_{N-k}+N^{1-\aaaa}\Delta_h^1 y_0,
\end{equation}
and the second-order approximation
$$\mathcal{L}_{h}[y]=h^{2-\aaaa}\sum_{k=1}^{N-1}k^{1-\aaaa}  y^{\prime\prime}_{N-k}+x^{1-\aaaa} y^\prime_{0.5}+O\llll(h^2\rrrr).$$
From Taylor expansion of the function $y(x)$ at the points $x=x_{0.5}=0.5h$ and $x=x_n=nh$ we obtain
\begin{equation}\label{Differences} 
\Delta_h^1 y_0=y_1-y_0=h y_{0.5}^\prime+\dddd{h^3}{24} y_{0.5}^{\prime\prime\prime}+O\llll(h^5\rrrr), \Delta_h^2 y_n=h^2 y_{n}^{\prime\prime}+\dddd{h^4}{12} y_{n}^{(4)}+O\llll(h^6\rrrr)
\end{equation}
\begin{lem} Let $y(t)$ be a sufficiently differentiable function. Then
$$\mathcal{L}_{h}[y]=h^{2-\aaaa}\sum_{k=1}^{N-1}k^{1-\aaaa}  y^{\prime\prime}_{N-k}+\dddd{h^{4-\aaaa}}{12}\sum_{k=1}^{N-1}k^{1-\aaaa}  y^{(4)}_{N-k}+x^{1-\aaaa} \llll(y^\prime_{0.5}+\dddd{h^2}{24}y^{\prime\prime\prime}_{0.5}\rrrr)+O\llll(h^4\rrrr).$$
\end{lem}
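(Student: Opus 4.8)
The plan is to start from the exact representation \eqref{Representation} of $\mathcal{L}_h[y]$ and substitute the two Taylor expansions from \eqref{Differences}. Writing
$$h^\aaaa\mathcal{L}_h[y]=\sum_{k=1}^{N-1}k^{1-\aaaa}\Delta_h^2 y_{N-k}+N^{1-\aaaa}\Delta_h^1 y_0,$$
I would replace $\Delta_h^2 y_{N-k}$ by $h^2 y''_{N-k}+\frac{h^4}{12}y^{(4)}_{N-k}+O(h^6)$ and $\Delta_h^1 y_0$ by $h y'_{0.5}+\frac{h^3}{24}y'''_{0.5}+O(h^5)$. Collecting terms and dividing by $h^\aaaa$ gives precisely the three displayed groups:
$$h^{2-\aaaa}\sum_{k=1}^{N-1}k^{1-\aaaa}y''_{N-k}+\dddd{h^{4-\aaaa}}{12}\sum_{k=1}^{N-1}k^{1-\aaaa}y^{(4)}_{N-k}+x^{1-\aaaa}\llll(y'_{0.5}+\dddd{h^2}{24}y'''_{0.5}\rrrr),$$
since $N^{1-\aaaa}h^{1-\aaaa}=(Nh)^{1-\aaaa}=x^{1-\aaaa}$, plus the accumulated remainder terms.

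The work is therefore in controlling the two error contributions. For the forward-difference term the remainder is $N^{1-\aaaa}\cdot h^{-\aaaa}\cdot O(h^5)=x^{1-\aaaa}O(h^4)=O(h^4)$, which is harmless. The genuinely delicate part is the tail of the central-difference expansion: the contribution is $h^{-\aaaa}\sum_{k=1}^{N-1}k^{1-\aaaa}O(h^6)$, and naively bounding $O(h^6)$ uniformly gives $h^{6-\aaaa}\sum_{k=1}^{N-1}k^{1-\aaaa}$. By the expansion formula \eqref{SPApprox} (or \eqref{Expansion1}), $\sum_{k=1}^{N-1}k^{1-\aaaa}=O(N^{2-\aaaa})$, so this is $h^{6-\aaaa}O(N^{2-\aaaa})=h^4\,O\bigl((Nh)^{2-\aaaa}\bigr)=x^{2-\aaaa}O(h^4)=O(h^4)$. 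So the main obstacle — making sure the summed-up remainder does not degrade past $O(h^4)$ — is resolved by exactly the same power-sum estimate used throughout Section 3, namely that $h^{m-\aaaa}\sum_{k=1}^{N-1}k^{1-\aaaa}$ converts to $h^{m-2}x^{2-\aaaa}$.

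One point that needs care: the Taylor expansions in \eqref{Differences} require $y$ to have enough bounded derivatives (a continuous sixth derivative for the central difference, fifth for the forward difference), which is covered by the hypothesis that $y$ is ``sufficiently differentiable''; I would note this explicitly. It also matters that the implied constants in the pointwise $O(h^6)$ and $O(h^5)$ terms are uniform in the node index, which follows from boundedness of $y^{(6)}$ and $y^{(5)}$ on $[0,x]$ via Lagrange's form of the remainder. With these observations in place the identity follows by direct substitution and the collection of the three error pieces into a single $O(h^4)$.
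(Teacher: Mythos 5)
Your proposal is correct and follows essentially the same route as the paper: substitute the expansions \eqref{Differences} into the representation \eqref{Representation}, use $N^{1-\aaaa}h^{1-\aaaa}=x^{1-\aaaa}$, and absorb the remainders via the power-sum estimate \eqref{SPApprox} giving $h^{6-\aaaa}\sum_{k=1}^{N-1}k^{1-\aaaa}=O\llll(x^{2-\aaaa}h^4\rrrr)$ and $x^{1-\aaaa}O\llll(h^4\rrrr)$. Your explicit remarks on the required differentiability and uniformity of the Taylor remainders are a sensible addition but do not change the argument.
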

\begin{proof} From \eqref{Representation}  and approximations \eqref{Differences} for $\Delta_h^1 y_0$ and $\Delta_h^2 y_n$
\begin{align*}
h^\aaaa \mathcal{L}_{h}[y]=\sum_{k=1}^{N-1}k^{1-\aaaa}\bigg(h^2 y_{N-k}^{\prime\prime}+\dddd{h^4}{12}&y_{N-k}^{(4)}+O\llll(h^6\rrrr)\bigg)\\
&+n^{1-\aaaa}\llll( h y_{0.5}^\prime+\dddd{h^3}{24} y_{0.5}^{\prime\prime\prime}+O\llll(h^5\rrrr)\rrrr),
\end{align*}
\begin{align*}
\mathcal{L}_{h}[y]=h^{2-\aaaa}\sum_{k=1}^{N-1}k^{1-\aaaa} y_{N-k}^{\prime\prime}+\dddd{h^{4-\aaaa}}{12} &\sum_{k=1}^{N-1}k^{1-\aaaa} y_{N-k}^{(4)}+O\llll(h^{6-\aaaa}\rrrr)\sum_{k=1}^{n-1}k^{1-\aaaa}\\
&+x^{1-\aaaa}\llll( y_{0.5}^\prime+\dddd{h^2}{24} y_{0.5}^{\prime\prime\prime}\rrrr)+x^{1-\aaaa}O\llll(h^4\rrrr).
\end{align*}
From \eqref{SPApprox}
$$ O\llll(h^{6-\aaaa}\rrrr)\sum_{k=1}^{N-1}k^{1-\aaaa}=O\llll(h^{6-\aaaa}\rrrr)O\llll(N^{2-\aaaa}\rrrr)=O\llll(x^{2-\aaaa}h^{4}\rrrr)=O\llll(h^{4}\rrrr).
$$
Then
\begin{align*}
\mathcal{L}_{h}[y]=h^{2-\aaaa}\sum_{k=1}^{N-1}k^{1-\aaaa}  y^{\prime\prime}_{N-k}+\dddd{h^{4-\aaaa}}{12}&\sum_{k=1}^{N-1}k^{1-\aaaa}  y^{(4)}_{N-k}\\
&+x^{1-\aaaa} \llll(y^\prime_{0.5}+\dddd{h^2}{24}y^{\prime\prime\prime}_{0.5}\rrrr)+O\llll(h^4\rrrr).
\end{align*}
\end{proof}
The fractional differentiation and integration operations on a function are non-commutative. We  often consider the fractional integral as a fractional derivative of negative order. In Lemma 8 and Lemma 9 we determine the relations for the composition of the fractional derivative/integral and the second derivative.
\begin{lem} Let $y(t)$ be a sufficiently differentiable function. Then
\begin{equation}\label{SDFI}
\dddd{d^2}{dx^2}J^\aaaa y(x)=J^\aaaa y''(x)+\aaaa x^{\aaaa-1}y(0)+x^\aaaa y'(0).
\end{equation}
\end{lem}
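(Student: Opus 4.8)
The plan is to reduce the claim to one differentiation identity that is then applied twice. First I would prove the auxiliary formula
$$\dddd{d}{dx}J^\aaaa g(x)=x^\aaaa g(0)+J^\aaaa g'(x)$$
for every $g\in C^1[0,x]$. Starting from $J^\aaaa g(x)=\int_0^x(x-t)^\aaaa g(t)\,dt$, integrate by parts using the antiderivative $v(t)=-(x-t)^{1+\aaaa}/(1+\aaaa)$; because $1+\aaaa>0$ the boundary term at $t=x$ vanishes, leaving
$$J^\aaaa g(x)=\dddd{x^{1+\aaaa}}{1+\aaaa}\,g(0)+\dddd{1}{1+\aaaa}\,J^{1+\aaaa}g'(x).$$
Now the integrand $(x-t)^{1+\aaaa}g'(t)$ of $J^{1+\aaaa}g'(x)$ and its $x$-derivative $(1+\aaaa)(x-t)^\aaaa g'(t)$ are continuous on the closed triangle $\{0\le t\le x\}$, so the Leibniz rule applies with no boundary contribution and $\dddd{d}{dx}J^{1+\aaaa}g'(x)=(1+\aaaa)J^\aaaa g'(x)$. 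Differentiating the displayed identity termwise then gives the auxiliary formula.

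Next I would apply the auxiliary formula to $y$,
$$\dddd{d}{dx}J^\aaaa y(x)=x^\aaaa y(0)+J^\aaaa y'(x),$$
and differentiate once more. The first summand contributes $\aaaa x^{\aaaa-1}y(0)$, and the auxiliary formula applied to $y'$ turns the second summand into $x^\aaaa y'(0)+J^\aaaa y''(x)$. Adding the two contributions yields
$$\dddd{d^2}{dx^2}J^\aaaa y(x)=J^\aaaa y''(x)+\aaaa x^{\aaaa-1}y(0)+x^\aaaa y'(0),$$
which is \eqref{SDFI}. The second application of the auxiliary formula — to $y'$ rather than $y$ — is exactly where the hypothesis that $y$ be sufficiently differentiable enters; here $y\in C^2[0,x]$ is enough.

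I expect the only delicate point to be the justification of differentiating under the integral sign, and the purpose of performing one integration by parts at the very start is to avoid it: after that step, every exponent that occurs is $\aaaa$ or $1+\aaaa$, both positive, so all integrands extend continuously to the diagonal $t=x$ and the Leibniz rule is immediate. Differentiating $J^\aaaa y(x)$ directly would instead produce the kernel $(x-t)^{\aaaa-1}$, which is unbounded at $t=x$ and would force a separate limiting argument; routing the proof through integration by parts sidesteps this entirely, so beyond this routine bookkeeping I anticipate no real obstacle.
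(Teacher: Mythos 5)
Your proof is correct, and its overall architecture is the same as the paper's: both establish the first-order identity $\frac{d}{dx}J^\aaaa y(x)=x^\aaaa y(0)+J^\aaaa y'(x)$ and then differentiate once more, applying the same identity to $y'$ to produce $x^\aaaa y'(0)+J^\aaaa y''(x)$ together with the term $\aaaa x^{\aaaa-1}y(0)$. The one genuine difference is how that auxiliary identity is obtained. The paper differentiates under the integral sign first, writing $\frac{d}{dx}\int_0^x(x-t)^\aaaa y(t)\,dt=\aaaa\int_0^x(x-t)^{\aaaa-1}y(t)\,dt$, and only then integrates by parts; this passes through the kernel $(x-t)^{\aaaa-1}$, which is unbounded at $t=x$, so the Leibniz step there tacitly needs an extra limiting argument. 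You reverse the order: integrate by parts inside $J^\aaaa g(x)$ to raise the exponent to $1+\aaaa$, and only then differentiate, so every integrand extends continuously to $t=x$ and the Leibniz rule applies without any singular-kernel issue. This buys a cleaner justification of the delicate step at the cost of one extra bookkeeping identity ($\frac{d}{dx}J^{1+\aaaa}g'(x)=(1+\aaaa)J^\aaaa g'(x)$); the paper's version is shorter but glosses over exactly the point you isolate. Your remark that $y\in C^2[0,x]$ suffices is also accurate.
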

\begin{proof} By differentiating the fractional integral $J^\aaaa y(x)$ with respect to $x$ we obtain
$$\dddd{d}{dx}J^\aaaa y(x)=\dddd{d}{dx}\int_0^x(x-t)^\aaaa y(t)dt=\aaaa \int_0^x(x-t)^{\aaaa-1} y(t)dt,$$
$$\dddd{d}{dx}J^\aaaa y(x)=\aaaa \int_0^x y(t)d\llll( -\dddd{(x-t)^{\aaaa}}{\aaaa} \rrrr)=\llll[ -(x-t)^{\aaaa}y(t) \rrrr]_0^x+ \int_0^x (x-t)^\aaaa y'(t)d t,$$
\begin{equation}\label{DJ}
\dddd{d}{dx}J^\aaaa y(x)=x^\aaaa y(0)+ J^\aaaa y'(x).
\end{equation}
By differentiating \eqref{DJ} with respect to $x$ we obtain
\begin{equation*}
\dddd{d^2}{dx^2}J^\aaaa y(x)=J^\aaaa y''(x)+\aaaa x^{\aaaa-1} y(0)+x^\aaaa y'(0).
\end{equation*}
\end{proof}
\begin{lem} Let $y(t)$ be a sufficiently differentiable function. Then
$$y^{[2+\aaaa]}(x)=y^{[\aaaa+2]}(x)+\dddd{y'(0)}{\GGGG(-\aaaa)x^{1+\aaaa}}+\dddd{y''(0)}{\GGGG(1-\aaaa)x^\aaaa}.$$
\end{lem}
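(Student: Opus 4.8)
The plan is to express both Miller-Ross derivatives $y^{[2+\alpha]}(x)=\frac{d^2}{dx^2}y^{(\alpha)}(x)$ and $y^{[\alpha+2]}(x)=\bigl(y''\bigr)^{(\alpha)}(x)$ in terms of the fractional integral $J^{1-\alpha}$ acting on derivatives of $y$, using \eqref{CDF} together with the relation \eqref{SDFI} of Lemma 8, and then to subtract the two expressions.

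First I would apply \eqref{CDF}, which gives
\[
\Gamma(2-\alpha)\,y^{(\alpha)}(x)=J^{1-\alpha}y''(x)+x^{1-\alpha}y'(0).
\]
Differentiating twice in $x$: for the power term, $\frac{d^2}{dx^2}x^{1-\alpha}=-\alpha(1-\alpha)x^{-1-\alpha}$; for the fractional-integral term, I would invoke \eqref{SDFI} with the parameter $1-\alpha$ in place of $\alpha$ and with $y''$ in place of $y$, obtaining $\frac{d^2}{dx^2}J^{1-\alpha}y''(x)=J^{1-\alpha}y^{(4)}(x)+(1-\alpha)x^{-\alpha}y''(0)+x^{1-\alpha}y'''(0)$. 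Hence
\[
\Gamma(2-\alpha)\,y^{[2+\alpha]}(x)=J^{1-\alpha}y^{(4)}(x)+(1-\alpha)x^{-\alpha}y''(0)+x^{1-\alpha}y'''(0)-\alpha(1-\alpha)x^{-1-\alpha}y'(0).
\]

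Next I would apply \eqref{CDF} to the function $y''$ in place of $y$, which yields $\Gamma(2-\alpha)\,y^{[\alpha+2]}(x)=J^{1-\alpha}y^{(4)}(x)+x^{1-\alpha}y'''(0)$. Subtracting, the $J^{1-\alpha}y^{(4)}$ and $x^{1-\alpha}y'''(0)$ contributions cancel and I am left with
\[
\Gamma(2-\alpha)\bigl(y^{[2+\alpha]}(x)-y^{[\alpha+2]}(x)\bigr)=(1-\alpha)x^{-\alpha}y''(0)-\alpha(1-\alpha)x^{-1-\alpha}y'(0).
\]
Finally I would clean up the gamma factors: dividing by $\Gamma(2-\alpha)=(1-\alpha)\Gamma(1-\alpha)$ and then using $\Gamma(1-\alpha)=-\alpha\,\Gamma(-\alpha)$ (so that $-\alpha/\Gamma(1-\alpha)=1/\Gamma(-\alpha)$) turns the right-hand side into $\frac{y''(0)}{\Gamma(1-\alpha)x^{\alpha}}+\frac{y'(0)}{\Gamma(-\alpha)x^{1+\alpha}}$, which is exactly the asserted identity after transposing $y^{[\alpha+2]}(x)$ to the left.

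There is no serious obstacle here: the argument is a short computation once Lemma 8 is available. The only points that need care are (i) applying \eqref{SDFI} with the shifted order $1-\alpha$ and with $y''$ (rather than $y$) as the integrand, and (ii) the two functional-equation reductions of the Gamma function. One should also note that $0<1-\alpha<1$, so $J^{1-\alpha}$ and all the power functions $x^{-\alpha}$, $x^{-1-\alpha}$ that appear are well defined on $(0,x]$, and that the hypothesis ``$y$ sufficiently differentiable'' is what guarantees that $y^{(4)}$ exists and is integrable, so that \eqref{CDF} may legitimately be applied to $y''$.
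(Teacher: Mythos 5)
Your proposal is correct and follows essentially the same route as the paper: apply \eqref{CDF}, differentiate twice using Lemma 8 with order $1-\aaaa$ and integrand $y''$, recognize $J^{1-\aaaa}y^{(4)}(x)+x^{1-\aaaa}y'''(0)$ as $\GGGG(2-\aaaa)y^{[\aaaa+2]}(x)$ via \eqref{CDF} applied to $y''$, and simplify with $\GGGG(2-\aaaa)=(1-\aaaa)\GGGG(1-\aaaa)$ and $\GGGG(1-\aaaa)=-\aaaa\GGGG(-\aaaa)$. The only difference is cosmetic: the paper absorbs the second use of \eqref{CDF} by identifying that combination inside the expression, whereas you state it separately and subtract.
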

\begin{proof} From \eqref{CDF} and \eqref{SDFI} we have that
$$y^{[2+\aaaa]}(x)=\dddd{d^2}{dx^2} y^{(\aaaa)}(x)=\dddd{1}{\GGGG(2-\aaaa)}\dddd{d^2}{dx^2}\llll( J^{1-\aaaa} y''(x)+x^{1-\aaaa}y'(0)\rrrr),$$
\begin{align*}
y^{[2+\aaaa]}(x)=\dddd{1}{\GGGG(2-\aaaa)}\bigg( J^{1-\aaaa} y^{(4)}(x)+&\dddd{1-\aaaa}{ x^{\aaaa}}y''(0)\\
&+x^{1-\aaaa} y'''(0)-\dddd{\aaaa(1-\aaaa)}{x^{1+\aaaa}}y'(0)\bigg),
\end{align*}
\begin{align*}
y^{[2+\aaaa]}(x)=\dddd{1}{\GGGG(2-\aaaa)}&\llll( J^{1-\aaaa} y^{(4)}(x)+x^{1-\aaaa} y'''(0)\rrrr)\\
&+\dddd{1-\aaaa}{ \GGGG(2-\aaaa)x^{\aaaa}}y''(0)-\dddd{\aaaa(1-\aaaa)}{\GGGG(2-\aaaa)x^{1+\aaaa}}y'(0).
\end{align*}
The first expression is the Caputo derivative \eqref{CDF} of the function $y''(x)$. Then
\begin{align*}
y^{[2+\aaaa]}(x)=y^{[\aaaa+2]}(x)+\dddd{y''(0)}{\GGGG(1-\aaaa)x^{\aaaa}}+\dddd{y'(0)}{\GGGG(-\aaaa)x^{1+\aaaa}}.
\end{align*}
\end{proof}
\begin{lem} Let $y(t)$ be a sufficiently differentiable function. Then
\begin{align}\label{l10}
\mathcal{L}_h[y]=\GGGG(2-\aaaa)y^{(\aaaa)}(x)&+\zzzz(\aaaa-1)y''(x)h^{2-\aaaa}-\zzzz(\aaaa-2)y'''(x)h^{3-\aaaa}\\
+\dddd{1}{12}&\llll( J^{1-\aaaa}y^{(4)}(x)+(1-\aaaa)x^{-\aaaa}y''(0)+x^{1-\aaaa}y'''(0) \rrrr)h^2\nonumber\\
&+\dddd{1}{2}\llll(\zzzz(\aaaa-3)+\dddd{\zzzz(\aaaa-1)}{6}\rrrr)y^{(4)}(x)h^{4-\aaaa}+O\llll(h^4\rrrr).\nonumber
\end{align}
\end{lem}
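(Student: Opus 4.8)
\emph{Proof strategy.} The plan is to feed into the representation of $\mathcal{L}_h[y]$ supplied by Lemma 7 the fourth-order expansions of the two fractional-integral sums it contains, to Taylor-expand the one remaining boundary term, and to collect the outcome by powers of $h$. Lemma 7 reads
$$\mathcal{L}_{h}[y]=h^{2-\aaaa}\sum_{k=1}^{N-1}k^{1-\aaaa}y^{\prime\prime}_{N-k}+\dddd{h^{4-\aaaa}}{12}\sum_{k=1}^{N-1}k^{1-\aaaa}y^{(4)}_{N-k}+x^{1-\aaaa}\llll(y^\prime_{0.5}+\dddd{h^2}{24}y^{\prime\prime\prime}_{0.5}\rrrr)+O\llll(h^4\rrrr),$$
and since the $k=0$ summand vanishes, each of the two sums has the form $h^{2-\aaaa}\sum_{k=0}^{N-1}k^{1-\aaaa}w(x-kh)$ handled by Corollary 6.

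First I would apply \eqref{J1a} with $y$ replaced by $y''$ to the first sum, keeping every term through order $h^{4-\aaaa}$ with remainder $O(h^4)$; this yields $J^{1-\aaaa}y''(x)$, the order-$h$ term $-\frac12 x^{1-\aaaa}y''(0)h$, the terms $\zzzz(\aaaa-1)y''(x)h^{2-\aaaa}$, $-\zzzz(\aaaa-2)y'''(x)h^{3-\aaaa}$ and $\frac12\zzzz(\aaaa-3)y^{(4)}(x)h^{4-\aaaa}$, and the order-$h^2$ term $\frac{1}{12}\bigl((1-\aaaa)x^{-\aaaa}y''(0)-x^{1-\aaaa}y'''(0)\bigr)h^2$. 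Next I would apply \eqref{J1a} with $y$ replaced by $y^{(4)}$ to the second sum; because of the prefactor $h^{4-\aaaa}/12$, only the first three terms of that expansion survive modulo $O(h^4)$, and they contribute $\frac{1}{12}J^{1-\aaaa}y^{(4)}(x)h^2-\frac{1}{24}x^{1-\aaaa}y^{(4)}(0)h^3+\frac{1}{12}\zzzz(\aaaa-1)y^{(4)}(x)h^{4-\aaaa}$. Finally I would Taylor-expand the boundary term at $t=0$, using $y'_{0.5}=y'(0)+\frac h2 y''(0)+\frac{h^2}{8}y'''(0)+\frac{h^3}{48}y^{(4)}(0)+O(h^4)$ and $\frac{h^2}{24}y'''_{0.5}=\frac{h^2}{24}y'''(0)+\frac{h^3}{48}y^{(4)}(0)+O(h^4)$, which gives $x^{1-\aaaa}\bigl(y'_{0.5}+\frac{h^2}{24}y'''_{0.5}\bigr)=x^{1-\aaaa}\bigl(y'(0)+\frac h2 y''(0)+\frac{h^2}{6}y'''(0)+\frac{h^3}{24}y^{(4)}(0)\bigr)+O(h^4)$.

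Adding the three contributions, the pair of order-$h$ terms $\pm\frac12 x^{1-\aaaa}y''(0)h$ cancels and the pair of order-$h^3$ terms $\pm\frac{1}{24}x^{1-\aaaa}y^{(4)}(0)h^3$ cancels. The constant part is $J^{1-\aaaa}y''(x)+x^{1-\aaaa}y'(0)$, equal to $\GGGG(2-\aaaa)y^{(\aaaa)}(x)$ by \eqref{CDF}. The order-$h^2$ coefficient collapses, using $-\frac1{12}+\frac16=\frac1{12}$ on the $y'''(0)$ contributions, to $\frac{1}{12}\bigl(J^{1-\aaaa}y^{(4)}(x)+(1-\aaaa)x^{-\aaaa}y''(0)+x^{1-\aaaa}y'''(0)\bigr)$; the order-$h^{4-\aaaa}$ coefficient becomes $\bigl(\frac12\zzzz(\aaaa-3)+\frac{1}{12}\zzzz(\aaaa-1)\bigr)y^{(4)}(x)=\frac12\bigl(\zzzz(\aaaa-3)+\frac{\zzzz(\aaaa-1)}{6}\bigr)y^{(4)}(x)$; and the order-$h^{2-\aaaa}$ and $h^{3-\aaaa}$ terms are carried over unchanged. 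This is precisely \eqref{l10}.

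The work is essentially bookkeeping, and the only genuine obstacle is controlling the low-order cancellations: the expansion comes out clean only because the boundary term is pushed to order $h^3$, so that its $\frac12 x^{1-\aaaa}y''(0)h$ and $\frac{1}{24}x^{1-\aaaa}y^{(4)}(0)h^3$ pieces exactly cancel the corresponding pieces of the two sums. One must also notice that the $h^{4-\aaaa}$ coefficient acquires the additional term $\frac{1}{12}\zzzz(\aaaa-1)y^{(4)}(x)$ from the fourth-derivative sum, which is exactly what produces the $\zzzz(\aaaa-1)/6$ inside the bracket; overlooking this gives the wrong coefficient. Since $0<\aaaa<1$, the exponents $h^{2-\aaaa},h^2,h^{3-\aaaa},h^3,h^{4-\aaaa}$ are pairwise distinct and all larger than $h^4$, so collecting by exact power is legitimate, and every error term is $O(h^4)$ — in particular the prefactor $h^{4-\aaaa}/12$ absorbs the $O(h^{\aaaa})$ error produced by truncating the expansion of the fourth-derivative sum at its $h^{2-\aaaa}$ term.
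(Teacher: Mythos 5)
Your proposal is correct and follows essentially the same route as the paper: Lemma 7 combined with Corollary 6 applied to $y''$ and $y^{(4)}$, Taylor expansion of the half-node boundary term, cancellation of the $h$ and $h^3$ pieces, and identification of $J^{1-\aaaa}y''(x)+x^{1-\aaaa}y'(0)$ with $\GGGG(2-\aaaa)y^{(\aaaa)}(x)$ via \eqref{CDF}. The only cosmetic difference is that the paper gathers the boundary-related terms into an auxiliary quantity $S_h$ before simplifying, whereas you track the cancellations term by term; the coefficients you obtain agree with \eqref{l10}.
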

\begin{proof}  From Corollary 6 and Lemma 7
\begin{align*}
&h^{2-\aaaa}\sum_{k=0}^{N-1} k^{1-\aaaa} y''_{N-k}=J^{1-\aaaa}y(x)-\dddd{y''_0 x^{1-\aaaa}h}{2}+\dddd{(1-\aaaa) x^{-\aaaa}y''_0-x^{1-\aaaa} y'''_0}{12}h^2\\
&+\zzzz(\aaaa-1) y(x)h^{2-\aaaa}-y'(x)\zzzz(\aaaa-2)h^{3-\aaaa}+\dddd{y''(x)}{2}\zzzz(\aaaa-3)h^{4-\aaaa}+O\llll(h^4 \rrrr),
\end{align*}
\begin{align*}
&h^{2-\aaaa}\sum_{k=0}^{N-1} k^{1-\aaaa} y^{(4)}_{N-k}=J^{1-\aaaa}y^{(4)}(x)-\dddd{y^{(4)}_0 x^{1-\aaaa}h}{2}+\zzzz(\aaaa-1) y^{(4)}(x)h^{2-\aaaa}+O\llll(h^2 \rrrr),
\end{align*}
and 
$$\mathcal{L}_{h}[y]=h^{2-\aaaa}\sum_{k=1}^{N-1}k^{1-\aaaa}  y^{\prime\prime}_{N-k}+\dddd{h^{4-\aaaa}}{12}\sum_{k=1}^{N-1}k^{1-\aaaa}  y^{(4)}_{N-k}+x^{1-\aaaa} \llll(y^\prime_{0.5}+\dddd{h^2}{24}y^{\prime\prime\prime}_{0.5}\rrrr)+O\llll(h^4\rrrr).$$
Then
\begin{align}\label{l10_2}
\mathcal{L}_h[y]=J^{1-\aaaa}&y''(x)+J^{1-\aaaa}y^{(4)}(x)\dddd{h^2}{12}+\zzzz(\aaaa-1)y''(x)h^{2-\aaaa}\nonumber\\
&+\dddd{1-\aaaa}{12}x^{-\aaaa}y''(0)h^2-\zzzz(\aaaa-2)y'''(x)h^{3-\aaaa}\\
&+\dddd{1}{2}\llll(\zzzz(\aaaa-3)+\dddd{\zzzz(\aaaa-1)}{6}\rrrr)y^{(4)}(x)h^{4-\aaaa}+x^{1-\aaaa}S_h+O\llll(h^4\rrrr),\nonumber
\end{align}
where
$$S_h=y'_{0.5}+\dddd{h^2}{24}y'''_{0.5}-\dddd{h}{2}y''_{0}-\dddd{h^2}{12}y'''_{0}-\dddd{h^3}{24}y^{(4)}_{0}.$$
By Taylor Expansion at $x=0$,
$$y'_{0.5}=y'_0+\dddd{h}{2}y''_0+\dddd{h^2}{8}y'''_0+\dddd{h^3}{48}y^{(4)}_0+O\llll(h^4\rrrr),$$
$$y'''_{0.5}=y'''_0+\dddd{h}{2}y^{(4)}_0+O\llll(h^4\rrrr).$$
Then
$$y'_{0.5}+\dddd{h^2}{24}y'''_{0.5}=y'_0+\dddd{h}{2}y''_0+\dddd{h^2}{6}y'''_0+\dddd{h^3}{24}y^{(4)}_0+O\llll(h^4\rrrr),$$
$$S_h=y'_{0.5}+\dddd{h^2}{24}y'''_{0.5}-\dddd{h}{2}y''_{0}-\dddd{h^2}{12}y'''_{0}-\dddd{h^3}{24}y^{(4)}_{0},$$
\begin{equation}\label{l10_3}
S_h=y'_0+\dddd{h^2}{12}y'''_0+O\llll(h^4\rrrr).
\end{equation}
By substituting  \eqref{l10_3} in  \eqref{l10_2} we obtain
\begin{align*}
\mathcal{L}_h[y]=J^{1-\aaaa}y''(x)&+x^{1-\aaaa}y'(0)+\zzzz(\aaaa-1)y''(x)h^{2-\aaaa}-\zzzz(\aaaa-2)y'''(x)h^{3-\aaaa}\\
+\dddd{1}{12}&\llll( J^{1-\aaaa}y^{(4)}(x)+(1-\aaaa)x^{-\aaaa}y''(0)+x^{1-\aaaa}y'''(0) \rrrr)h^2\\
&+\dddd{1}{2}\llll(\zzzz(\aaaa-3)+\dddd{\zzzz(\aaaa-1)}{6}\rrrr)y^{(4)}(x)h^{4-\aaaa}+O\llll(h^4\rrrr).
\end{align*}
From \eqref{CDF} we have that $J^{1-\aaaa}y''(x)+x^{1-\aaaa}y'(0)=\GGGG(2-\aaaa)y^{(\aaaa)}(x)$.
\end{proof}
\begin{lem}   Let $y(t)$ be a sufficiently differentiable function. Then
\begin{align}\label{l11}
\mathcal{L}_h[y]=\GGGG(2-\aaaa)&\llll(y^{(\aaaa)}(x)+y^{[2+\aaaa]}(x)\dddd{h^2}{12}\rrrr)+\zzzz(\aaaa-1)y''(x)h^{2-\aaaa}\nonumber\\
&+\dddd{\aaaa(1-\aaaa)}{x^{1+\aaaa}}y'(0)\dddd{h^2}{12}-\zzzz(\aaaa-2)y'''(x)h^{3-\aaaa}\\
&+\dddd{1}{2}\llll(\zzzz(\aaaa-3)+\dddd{\zzzz(\aaaa-1)}{6}\rrrr)y^{(4)}(x)h^{4-\aaaa}+O\llll(h^4\rrrr).\nonumber
\end{align}
\end{lem}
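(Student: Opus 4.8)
The plan is to obtain Lemma~11 from Lemma~10 by rewriting a single term. The whole of \eqref{l10} already coincides term by term with the claimed expansion \eqref{l11} — the $\GGGG(2-\aaaa)y^{(\aaaa)}(x)$ term, the $\zzzz(\aaaa-1)y''(x)h^{2-\aaaa}$ term, the $\zzzz(\aaaa-2)y'''(x)h^{3-\aaaa}$ term, and the $y^{(4)}(x)h^{4-\aaaa}$ term are identical — except that the $h^2$-coefficient in \eqref{l10} is the bracket $J^{1-\aaaa}y^{(4)}(x)+(1-\aaaa)x^{-\aaaa}y''(0)+x^{1-\aaaa}y'''(0)$, whereas in \eqref{l11} the $h^2$-contribution reads $\GGGG(2-\aaaa)y^{[2+\aaaa]}(x)+\dfrac{\aaaa(1-\aaaa)}{x^{1+\aaaa}}y'(0)$. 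So the entire task reduces to proving that these two expressions are equal and substituting.

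First I would recognize a Caputo derivative hidden inside the bracket. Applying identity \eqref{CDF} to the function $y''$ in place of $y$ gives $\GGGG(2-\aaaa)D^\aaaa y''(x)=J^{1-\aaaa}y^{(4)}(x)+x^{1-\aaaa}y'''(0)$; since $y^{[\aaaa+2]}(x)=D^\aaaa D^2 y(x)=D^\aaaa y''(x)$, the bracket becomes $\GGGG(2-\aaaa)y^{[\aaaa+2]}(x)+(1-\aaaa)x^{-\aaaa}y''(0)$. Next I would pass from $y^{[\aaaa+2]}$ to $y^{[2+\aaaa]}$ using Lemma~9: multiplying $y^{[2+\aaaa]}(x)=y^{[\aaaa+2]}(x)+\dfrac{y'(0)}{\GGGG(-\aaaa)x^{1+\aaaa}}+\dfrac{y''(0)}{\GGGG(1-\aaaa)x^{\aaaa}}$ by $\GGGG(2-\aaaa)$ and using the functional equation $\GGGG(2-\aaaa)=(1-\aaaa)\GGGG(1-\aaaa)=-\aaaa(1-\aaaa)\GGGG(-\aaaa)$ yields $\GGGG(2-\aaaa)y^{[\aaaa+2]}(x)=\GGGG(2-\aaaa)y^{[2+\aaaa]}(x)+\dfrac{\aaaa(1-\aaaa)}{x^{1+\aaaa}}y'(0)-\dfrac{1-\aaaa}{x^{\aaaa}}y''(0)$. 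Substituting this into the expression for the bracket, the two $y''(0)$ contributions — the $-(1-\aaaa)x^{-\aaaa}y''(0)$ produced here and the $+(1-\aaaa)x^{-\aaaa}y''(0)$ already present — cancel, leaving the bracket equal to $\GGGG(2-\aaaa)y^{[2+\aaaa]}(x)+\dfrac{\aaaa(1-\aaaa)}{x^{1+\aaaa}}y'(0)$, exactly as required. Plugging this back into \eqref{l10} and distributing the factor $h^2/12$ produces \eqref{l11}.

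There is essentially no obstacle here beyond careful bookkeeping. The two points to watch are the sign in $\GGGG(2-\aaaa)/\GGGG(-\aaaa)=-\aaaa(1-\aaaa)$, which arises from two applications of the Gamma functional equation, and the observation that the $x^{-\aaaa}y''(0)$ term inherited from Lemma~10 is precisely what is annihilated by the $y''(0)$-term generated by Lemma~9, so that no $y''(0)$ dependence survives in the final expansion. Once that cancellation is verified, the identification is immediate and the lemma follows.
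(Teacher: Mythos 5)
Your proposal is correct and follows essentially the same route as the paper: both reduce the lemma to rewriting the $h^2$-bracket of \eqref{l10} via identity \eqref{CDF} applied to $y''$ together with Lemma 9 and the Gamma functional equation $\GGGG(2-\aaaa)=(1-\aaaa)\GGGG(1-\aaaa)=-\aaaa(1-\aaaa)\GGGG(-\aaaa)$, with the $y''(0)$ and $y'''(0)$ contributions cancelling exactly as you describe. The only difference is cosmetic ordering of the substitutions.
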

\begin{proof} From \eqref{CDF} and Lemma 9,
$$D^\aaaa y''(x)=y^{[2+\aaaa]}(x)-\dddd{y'(0)}{\GGGG(-\aaaa)x^{1+\aaaa}}-\dddd{y''(0)}{\GGGG(1-\aaaa)x^{\aaaa}},
$$
\begin{equation}\label{l11_2}
J^{1-\aaaa}y^{(4)}(x)=\GGGG(2-\aaaa)D^\aaaa y''(x)-x^{1-\aaaa}y'''(0).
\end{equation}
From Lemma 8,
$$J^{1-\aaaa}y^{(4)}(x)=\GGGG(2-\aaaa)y^{[2+\aaaa]}(x)-\dddd{\GGGG(2-\aaaa)y'(0)}{\GGGG(-\aaaa)x^{1+\aaaa}}-\dddd{\GGGG(2-\aaaa)y''(0)}{\GGGG(1-\aaaa)x^{\aaaa}}-x^{1-\aaaa}y'''(0),$$
$$J^{1-\aaaa}y^{(4)}(x)=\GGGG(2-\aaaa)y^{[2+\aaaa]}(x)+\dddd{\aaaa(1-\aaaa)y'(0)}{x^{1+\aaaa}}-\dddd{(1-\aaaa)y''(0)}{x^{\aaaa}}-x^{1-\aaaa}y'''(0).$$
By substituting  the expression \eqref{l11_2} for $J^{1-\aaaa}y^{(4)}(x)$ in \eqref{l10} we obtain \eqref{l11}.
\end{proof}
The fourth-order expansion formula \eqref{4thL1} of the $L1$ approximation for the Caputo derivative is obtained from the expansion \eqref{l11} of $\mathcal{L}_h[y]$ by dividing by $\GGGG(2-\aaaa)$. The error and order of approximation \eqref{4thL1} for the functions $y(x)=e^x$ and $y(x)=\sin x$ are given in Table 3. In Theorem 13 we use the first  terms of expansion \eqref{4thL1} to derive the formula for the three-point compact approximation \eqref{CML1} for the Caputo derivative. 
\begin{clm} Suppose that $y$ is a sufficiently differentiable function. Then
\begin{equation*}\label{c12}
y(x)+\dddd{h^2}{12}y''(x)=\dddd{13}{12}y(x)-\dddd{1}{6}y(x-h)+\dddd{1}{12}y(x-2h)+O\llll(h^3\rrrr).
\end{equation*}
\end{clm}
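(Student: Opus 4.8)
The plan is to prove this by straightforward Taylor expansion of the two shifted values $y(x-h)$ and $y(x-2h)$ about the point $x$, and then to collect the coefficients of $y(x)$, $y'(x)$ and $y''(x)$ on the right-hand side. Since the function $y$ is sufficiently differentiable, I may use the Taylor expansions with Lagrange (or Peano) remainder of order $h^3$:
$$y(x-h)=y(x)-hy'(x)+\dddd{h^2}{2}y''(x)+O\llll(h^3\rrrr),\qquad y(x-2h)=y(x)-2hy'(x)+2h^2y''(x)+O\llll(h^3\rrrr).$$

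Next I would substitute these expansions into the linear combination $\frac{13}{12}y(x)-\frac{1}{6}y(x-h)+\frac{1}{12}y(x-2h)$ and group terms according to the order of the derivative. The coefficient of $y(x)$ is $\frac{13}{12}-\frac{1}{6}+\frac{1}{12}=1$; the coefficient of $hy'(x)$ is $-\frac{1}{6}(-1)+\frac{1}{12}(-2)=0$; and the coefficient of $h^2y''(x)$ is $-\frac{1}{6}\cdot\frac12+\frac{1}{12}\cdot 2=\frac{1}{12}$. Thus the right-hand side reduces to $y(x)+\frac{h^2}{12}y''(x)$, plus the accumulated remainder, which is $O(h^3)$ since each of the two remainders is $O(h^3)$ and the multiplying constants $-\frac16$ and $\frac{1}{12}$ are absolute. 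This establishes the claimed identity.

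I do not expect any genuine obstacle here: the statement is an elementary finite-difference/Taylor identity, and the only point requiring a word of care is that the $O(h^3)$ symbol absorbs a term of the form $c\,y'''(\xi)h^3$ with $\xi$ between $x-2h$ and $x$, which is legitimate because $y'''$ is continuous (hence bounded) on the relevant interval. I would therefore keep the write-up to two or three lines: state the two Taylor expansions, display the grouped sum, and read off the result. Alternatively, and perhaps more transparently for the reader, I could remark that this claim is exactly the special case of the final compact-approximation identity \eqref{CML1} obtained by formally replacing the Caputo derivatives $y_n^{(\aaaa)}$ by the ordinary function values $y_n$ (equivalently, taking $\aaaa=0$ in the modified-$L1$ stencil collapses it to the central second difference), so the computation is the backbone of Theorem 13 that follows.
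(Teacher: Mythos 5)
Your proposal is correct and follows essentially the same route as the paper: Taylor-expand $y(x-h)$ and $y(x-2h)$ at $x$ and verify that the coefficients of $y(x)$, $hy'(x)$ and $h^2y''(x)$ in the combination $\frac{13}{12}y(x)-\frac{1}{6}y(x-h)+\frac{1}{12}y(x-2h)$ are $1$, $0$ and $\frac{1}{12}$, with the remainder absorbed into $O(h^3)$. Your explicit coefficient bookkeeping is a slightly fuller write-up of the paper's two-line computation, and the concluding remark about the claim being the backbone of Theorem 13 matches how the paper uses it.
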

\begin{proof} From Taylor expansion at the point $x$
$$y(x-h)=y(x)-h y'(x)+\dddd{h^2}{2}y''(x)+O\llll(h^3\rrrr),$$
$$y(x-2h)=y(x)-2h y'(x)+2h^2 y''(x)+O\llll(h^3\rrrr).$$
Then
$$\dddd{13}{12}y(x)-\dddd{1}{6}y(x-h)+\dddd{1}{12}y(x-2h)=y(x)+\dddd{h^2}{12}y''(x)+O\llll(h^3\rrrr).
$$
\end{proof}
\begin{thm} Suppose that the funcyion $y$ is a sufficiently differentiable function on the interval $[0,x_n]$ and $y'(0)=0$. Then
\begin{equation*}
\dddd{1}{\GGGG(2-\aaaa)h^\aaaa}\sum_{k=0}^{n} \dddddd_k^{(\aaaa)} y_{n-k}=\dddd{13}{12}y^{(\aaaa)}_n-\dddd{1}{6}y^{(\aaaa)}_{n-1}+\dddd{1}{12}y^{(\aaaa)}_{n-2}+O\llll(h^{3-\aaaa}\rrrr).
\end{equation*}
\end{thm}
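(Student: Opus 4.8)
The plan is to combine the fourth-order expansion of Lemma 11 with Claim 12, in three steps. First I would replace the modified weights $\dddddd_k^{(\aaaa)}$ by the original $L1$ weights $\ssss_k^{(\aaaa)}$. Since $\dddddd_k^{(\aaaa)}=\ssss_k^{(\aaaa)}$ for $k\ge 3$, while $\dddddd_0^{(\aaaa)}-\ssss_0^{(\aaaa)}=-\zzzz(\aaaa-1)$, $\dddddd_1^{(\aaaa)}-\ssss_1^{(\aaaa)}=2\zzzz(\aaaa-1)$ and $\dddddd_2^{(\aaaa)}-\ssss_2^{(\aaaa)}=-\zzzz(\aaaa-1)$, this gives
\begin{equation*}
\dddd{1}{h^\aaaa}\sum_{k=0}^{n}\dddddd_k^{(\aaaa)}y_{n-k}
=\mathcal{L}_h[y]-\dddd{\zzzz(\aaaa-1)}{h^\aaaa}\bigl(y_n-2y_{n-1}+y_{n-2}\bigr),
\end{equation*}
where $\mathcal{L}_h[y]$ denotes the non-constant part of the $L1$ approximation taken with $N=n$, so that the base point in Lemma 11 is $x=x_n$. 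A Taylor expansion of $y$ at $x_n$ gives $y_n-2y_{n-1}+y_{n-2}=h^2y''(x_n)+O\llll(h^3\rrrr)$, hence the correction term equals $-\zzzz(\aaaa-1)y''(x_n)h^{2-\aaaa}+O\llll(h^{3-\aaaa}\rrrr)$.

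Second, I would substitute the expansion of Lemma 11 at $x=x_n$. The hypothesis $y'(0)=0$ is used here and only here: it annihilates the term $\dfrac{\aaaa(1-\aaaa)}{x_n^{1+\aaaa}}y'(0)\dfrac{h^2}{12}$, which is $O(h^2)$ and would otherwise dominate $h^{3-\aaaa}$ when $0<\aaaa<1$. The remaining $h^{3-\aaaa}$, $h^{4-\aaaa}$ and $O(h^4)$ contributions of Lemma 11 are all $O\llll(h^{3-\aaaa}\rrrr)$, so
\begin{equation*}
\mathcal{L}_h[y]=\GGGG(2-\aaaa)\llll(y^{(\aaaa)}(x_n)+\dddd{h^2}{12}\,y^{[2+\aaaa]}(x_n)\rrrr)+\zzzz(\aaaa-1)y''(x_n)h^{2-\aaaa}+O\llll(h^{3-\aaaa}\rrrr).
\end{equation*}
Adding the correction term of the first step cancels the two $\zzzz(\aaaa-1)y''(x_n)h^{2-\aaaa}$ pieces, and dividing by $\GGGG(2-\aaaa)$ leaves
\begin{equation*}
\dddd{1}{\GGGG(2-\aaaa)h^\aaaa}\sum_{k=0}^{n}\dddddd_k^{(\aaaa)}y_{n-k}=y^{(\aaaa)}(x_n)+\dddd{h^2}{12}\,y^{[2+\aaaa]}(x_n)+O\llll(h^{3-\aaaa}\rrrr).
\end{equation*}

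Third, since $y^{[2+\aaaa]}(x)=\dfrac{d^2}{dx^2}y^{(\aaaa)}(x)$, I would apply Claim 12 to the function $y^{(\aaaa)}$ to rewrite $y^{(\aaaa)}(x_n)+\frac{h^2}{12}y^{[2+\aaaa]}(x_n)$ as $\frac{13}{12}y^{(\aaaa)}_n-\frac16 y^{(\aaaa)}_{n-1}+\frac1{12}y^{(\aaaa)}_{n-2}+O\llll(h^3\rrrr)$; absorbing $O(h^3)$ into $O(h^{3-\aaaa})$ finishes the argument. The two Taylor expansions and the bookkeeping of which powers of $h$ go into the remainder are routine. The one step that needs care is this last substitution: Claim 12 is applied to $y^{(\aaaa)}$, which must be three times continuously differentiable on a neighbourhood of $[x_n-2h,x_n]$. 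This is harmless in the asymptotic regime $h\to 0$ with $x=x_n$ fixed and positive, because the Caputo derivative of a sufficiently smooth $y$ is smooth on $(0,\infty)$; the condition $y'(0)=0$ is precisely what prevents the leading singular contribution near the origin from entering the expansion below order $h^{3-\aaaa}$.
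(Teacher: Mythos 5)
Your proposal is correct and follows essentially the same route as the paper: truncate the expansion of Lemma 11 at order $h^{3-\aaaa}$ (the hypothesis $y'(0)=0$ removing the $h^2$-term with $y'(0)$), apply Claim 12 to $y^{(\aaaa)}$, and trade the $\zzzz(\aaaa-1)y''_n h^{2-\aaaa}$ term against the modification of the first three weights via the second difference $y_n-2y_{n-1}+y_{n-2}$. The only difference is bookkeeping order — you convert the weights first and cancel, while the paper applies the backward-difference substitution at the end — and you state explicitly where $y'(0)=0$ is used, which the paper leaves implicit.
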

\begin{proof} The $L1$ approximation has expansion of order $3-\aaaa$,
\begin{align*}
\dfrac{1}{\GGGG(2-\aaaa)h^\alpha}\sum_{k=0}^{n} \ssss_k^{(\alpha)} y_{n-k}= \dfrac{\mathcal{L}_{h}[y]}{\GGGG(2-\aaaa)}=y_n^{(\aaaa)}&+y_n^{[2+\aaaa]}\dddd{h^2}{12}\\
&+\dddd{\zzzz(\aaaa-1)}{\GGGG(2-\aaaa)}y''_n h^{2-\aaaa}+O\llll(h^{3-\aaaa}\rrrr).
\end{align*}
From Claim 12,
$$y^{(\aaaa)}_n+y^{[2+\aaaa]}_n\dddd{h^2}{12}=\dddd{13}{12}y^{(\aaaa)}_n-\dddd{1}{6}y^{(\aaaa)}_{n-1}+\dddd{1}{12}y^{(\aaaa)}_{n-2}+O\llll(h^3\rrrr).$$
The second-order backward difference approximation for the second derivative has first-order accuracy
$$y''_n=\dddd{y_n-2y_{n-1}+y_{n-2}}{h^2}+O\llll(h\rrrr).$$
Then
\begin{align*}
\dfrac{1}{\GGGG(2-\aaaa)h^\alpha}\sum_{k=0}^{n} \ssss_k^{(\alpha)} y_{n-k}&=\dddd{13}{12}y^{(\aaaa)}(x)-\dddd{1}{6}y^{(\aaaa)}(x-h)+\dddd{1}{12}y^{(\aaaa)}(x-2h)\\
&+\dddd{\zzzz(\aaaa-1)}{\GGGG(2-\aaaa)}\llll( \dddd{y_n-2y_{n-1}+y_{n-2}}{h^2} \rrrr) h^{2-\aaaa}+O\llll(h^{3-\aaaa}\rrrr),
\end{align*}
\begin{equation*}
\dddd{1}{\GGGG(2-\aaaa)h^\aaaa}\sum_{k=0}^{n-1} \dddddd_k^{(\aaaa)} y_{n-k}=\dddd{13}{12}y^{(\aaaa)}_n-\dddd{1}{6}y^{(\aaaa)}_{n-1}+\dddd{1}{12}y^{(\aaaa)}_{n-2}+O\llll(h^{3-\aaaa}\rrrr).
\end{equation*}
\end{proof}
\begin{table}[ht]
    \caption{Error and order of the expansion \eqref{4thL1} of the $L1$ approximation for the Caputo derivative  for the functions  $y(t)=e^t$ and $\aaaa=0.4,x=2$ (left) and $y(t)=\sin t,\aaaa=0.6,x=1$ (right).}
    \begin{subtable}{0.5\linewidth}
      \centering
  \begin{tabular}{l c c }
  \hline \hline
    $\boldsymbol{h}$ & $\mathbf{Error}$ & $\mathbf{Order}$  \\ 
		\hline \hline
$0.05$         &$2.21\times 10^{-6}$      &$4.059140$\\
$0.025$        &$1.34\times 10^{-7}$      &$4.042368$\\
$0.0125$       &$8.22\times 10^{-9}$     &$4.029349$\\
$0.00625$      &$5.07\times 10^{-10}$     &$4.019796$\\
$0.003125$     &$3.15\times 10^{-11}$     &$4.009793$\\
		\hline
  \end{tabular}
    \end{subtable}%
    \begin{subtable}{.5\linewidth}
      \centering
				\quad
  \begin{tabular}{ l  c  c }
    \hline \hline
    $\boldsymbol{h}$ & $\mathbf{Error}$ &$\mathbf{Order}$  \\ \hline \hline
$0.05$     & $2.26\times 10^{-8}$    & $4.046316$ \\
$0.025$    & $1.40\times 10^{-9}$    & $4.013357$ \\
$0.0125$   & $8.71\times 10^{-11}$   & $4.008679$ \\
$0.00625$  & $5.43\times 10^{-12}$   & $4.003960$ \\
$0.003125$ & $3.51\times 10^{-13}$   & $3.950424$ \\     
\hline
  \end{tabular}
    \end{subtable} 
\end{table}
\section{Numerical Experiments}
The fractional relaxation and subdiffusion equations are important fractional differential equations. Their analytical and numerical solutions have been studied  extensively  \cite{Alikhanov2015,DengLi2012,Dimitrov2014,Dimitrov2015_1,Dimitrov2015_2,GaoSunSun2015,GulsuOzturkAnapali2013,Lazarov2015_1,LinXu2007,Murio2008,Podlubny1999,ZengLiLiuTurner2015}. In this section we determine the numerical solutions of the fractional relaxation and subdiffusion equations which use the compact modified $L1$ approximation for the Caputo derivative
\begin{equation*}
\dddd{1}{\GGGG(2-\aaaa)h^\aaaa}\sum_{k=0}^n \dddddd_k^{(\aaaa)} y_{n-k}=\dddd{13}{12}y^{(\aaaa)}_n-\dddd{1}{6}y^{(\aaaa)}_{n-1}+\dddd{1}{12}y^{(\aaaa)}_{n-2}+O\llll(h^{3-\aaaa}\rrrr).
\end{equation*}
The fractional relaxation equation
\begin{equation} \label{RE}
y^{(\aaaa)}(x)+\llllll y(x)=F(x),\; y(0)=1,
\end{equation}
has the solution 
$$y(x)= E_{\aaaa}(-\llllll x^\aaaa)+\int_{0}^{x}\xi^{\aaaa-1} E_{\aaaa,\aaaa}\llll(-\llllll \xi^\aaaa\rrrr)F(x-\xi)d\xi.$$
The analytical solution of the  subdiffusion equation 
		\begin{equation*} \label{L41}
	\left\{
	\begin{array}{l l}
\dfrac{\partial^\alpha u(x,t)}{\partial t^\alpha}=\dfrac{\partial^2 u(x,t)}{\partial x^2},&  \\
u(x,0)=u_0(x),\;u(0,t)=u(\pi,t)=0,&  \\
	\end{array}
		\right . 
	\end{equation*}
is determined with the separation of variables method 
$$u(x,t)=\sum_{n=1}^\infty c_n E_\aaaa (-n^2  t^\aaaa) \sin (n x),$$
where the numbers $c_n$ are the coefficients of the Fourier sine series of the initial condition $u_0(x)$,
$$c_n=\dddd{2}{\pi}\int_0^\pi u_0(\xi) \sin (n \xi)d\xi.$$
In \cite{Dimitrov2015_1,Dimitrov2015_2} we derived the 
 numerical solutions $\{u_n\}_{n=0}^N$ and $\{v_n\}_{n=0}^N$ of the fractional relaxation equation \eqref{RE} which use the $L1$ approximation and the modified $L1$ approximation for the Caputo derivative,
\begin{equation}\label{L1_5}\tag{L1}
u_n=\dfrac{1}{\ssss_0^{(\alpha)}+\llllll h^\alpha\GGGG(2-\aaaa) } \left( \GGGG(2-\aaaa)h^\alpha F_n-\sum_{k=1}^n \ssss_k^{(\alpha)} u_{n-k} \right),
\end{equation}
\begin{equation}\label{ML1_5}\tag{ML1}
v_n=\dfrac{1}{\dddddd_0^{(\alpha)}+\llllll h^\alpha\GGGG(2-\aaaa) } \left( \GGGG(2-\aaaa)h^\alpha F_n-\sum_{k=1}^n \dddddd_k^{(\alpha)} v_{n-k} \right),
\end{equation}
and initial values  $u_0=v_0=y(0)$. We may also need to approximate the value of $y_1=y(h)$. A good choice for approximating $y_1$ is obtained from  the recurrence formula of numerical solution  \eqref{L1_5},
$$u_1=v_1=\frac{y(0)+\Gamma (2-a) h^a F(h)}{1+\llllll \Gamma (2-a) h^a }.
$$
In  \cite{Dimitrov2015_2} we discuss a method for improving the differentiability class  of the solutions of the fractional relaxation and subdiffusion equations. The method is based on a substitution using the fractional Taylor polynomials of the solution. When known in advance that $y(0)=y'(0)=0$, we may also set $u_1=v_1=0$.  This choice guarantees that the approximations $u_1$ and $v_1$ for the value of $y_1=y(h)$ have a second-order accuracy $O\llll( h^2 \rrrr)$.
\subsection{Numerical Solution of the Fractional Relaxation Equation }
Now we construct the numerical solution of the fractional relaxation equation on the interval $[0,1]$ using the compact modified $L1$ approximation for the Caputo derive
\begin{equation} \label{RE5}
y^{(\aaaa)}(x)+\llllll y(x)=F(x),
\end{equation}
Let $h=1/N$ and $x_k=k h$.
\begin{equation*}
\dddd{1}{\GGGG(2-\aaaa)h^\aaaa}\sum_{k=0}^n \dddddd_k^{(\aaaa)} y_{n-k}\approx \dddd{13}{12}y^{(\aaaa)}_n-\dddd{1}{6}y^{(\aaaa)}_{n-1}+\dddd{1}{12}y^{(\aaaa)}_{n-2}.
\end{equation*}
From \eqref{RE5} with $x=x_k$ we obtain $y^{(\aaaa)}_{k}=F_k-\llllll y_k$. Then
\begin{align*}
\dddd{1}{\GGGG(2-\aaaa)h^\aaaa}\sum_{k=0}^n \dddddd_k^{(\aaaa)} y_{n-k}\approx \dddd{13}{12}(F_n-&\llllll y_n)-\dddd{1}{6}\llll(F_{n-1}-\llllll y_{n-1}\rrrr)\\
&+\dddd{1}{12}\llll(F_{n-2}-\llllll y_{n-2}\rrrr).
\end{align*}
Let $w_n$ be an approximation for the solution $y_n$ at the point $x_n=n h$. The numbers $w_n$ are computed recursively with 
$w_0=y_0=y(0)$ and 
\begin{align*}
\dddd{1}{\GGGG(2-\aaaa)h^\aaaa}\sum_{k=0}^n \dddddd_k^{(\aaaa)} w_{n-k}= \dddd{13}{12}&(F_n-\llllll w_n)\\
&-\dddd{1}{6}\llll(F_{n-1}-\llllll w_{n-1}\rrrr)+\dddd{1}{12}\llll(F_{n-2}-\llllll w_{n-2}\rrrr),
\end{align*}
\begin{align}\label{CML1_5}
 w_n=\dddd{1}{12\dddddd_0^{(\aaaa)}+13\llllll (2-\aaaa)h^\aaaa}&\Bigg(-12\sum_{k=1}^n \dddddd_k^{(\aaaa)} w_{n-k}+ \GGGG(2-\aaaa)h^\aaaa(13F_n-\nonumber\\
-2(F_{n-1}&-\llllll w_{n-1})+  F_{n-2}-\llllll w_{n-2}) \Bigg).\nonumber\tag{CML1}
\end{align}
In Table 4 and Table 5 we compute the maximum error and order of numerical solutions  \eqref{L1_5},\eqref{ML1_5} and \eqref{CML1_5} for the fractional  relaxation equations \eqref{R1} and \eqref{R3} on the interval $[0,1]$. 

$\bullet$ The linear ordinary fractional differential  equation
\begin{equation}\label{R1}
y^{(\aaaa)}+y=x^{3-\aaaa}+\dddd{\GGGG(4-\aaaa)}{\GGGG(4-2\aaaa)}x^{3-2\aaaa},\quad y(0)=0
\end{equation}
has the solution $y(x)=x^{3-a}$.  Numerical solutions \eqref{ML1_5} and \eqref{CML1_5} are highly accurate approximations for the solution of equation \eqref{R1},  and their graphs are indistinguishable. In Figure 1 and Figure 2 we compare numerical solutions \eqref{ML1_5} and \eqref{CML1_5} with  numerical solution \eqref{L1_5}  on the interval $[0,1]$, when $h=0.1$.
\setlength{\tabcolsep}{0.5em}
{\renewcommand{\arraystretch}{1.1}
\begin{table}[ht]
	\caption{Maximum error and order of numerical solutions \eqref{L1_5},\eqref{ML1_5}  and \eqref{CML1_5} for relaxation equation \eqref{R1}  on the interval $[0,1]$, when $\aaaa=0.3$.}
	\centering
  \begin{tabular}{ l | c  c | c  c | c  c }
		\hline
		\hline
		\multirow{2}*{ $\quad \boldsymbol{h}$}  & \multicolumn{2}{c|}{{\bf L1}} & \multicolumn{2}{c|}{{\bf ML1}}  & \multicolumn{2}{c}{{\bf CML1}} \\
		\cline{2-7}  
   & $Error$ & $Order$  & $Error$ & $Order$  & $Error$ & $Order$ \\ 
		\hline \hline
$0.05$     & $0.00169$    & $1.59220$   & $0.0004066$          & $1.78691$  & $0.0003071$            & $2.7$     \\ 
$0.025$    & $0.00055$    & $1.61990$   & $0.0001101$          & $1.88486$  & $0.0000473$            & $2.7$      \\ 
$0.0125$   & $0.00018$    & $1.63900$   & $0.0000288$          & $1.93403$  & $7.3\times 10^{-6}$    & $2.7$    \\ 
$0.00625$  & $0.00006$    & $1.65275$   & $7.4\times 10^{-6}$  & $1.96103$  & $1.1\times 10^{-6}$    & $2.7$     \\ 
$0.003125$ & $0.00002$    & $1.66295$   & $1.9\times 10^{-6}$  & $1.97659$  & $1.7\times 10^{-7}$    & $2.7$     \\ 
\hline
  \end{tabular}
	\end{table}
	}	
	
	\begin{figure}[ht]
\centering
\begin{minipage}{.48\textwidth}
  \centering
  \includegraphics[width=.99\linewidth]{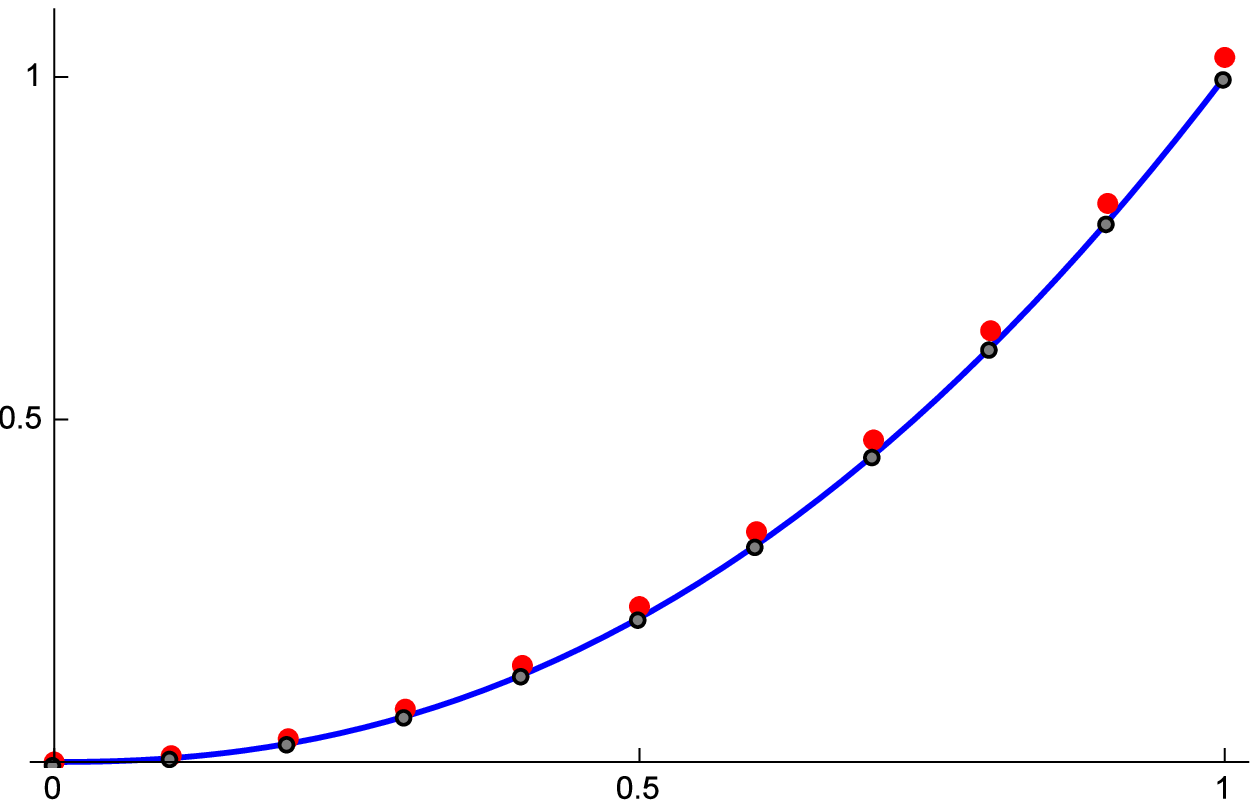}
  \captionof{figure} {Graph of the solution of equation \eqref{R1} and numerical solutions \eqref{L1_5} (red) and \eqref{ML1_5} (gray) for $\alpha=0.75$.}
\end{minipage}%
\hspace{0.2cm}
\begin{minipage}{.48\textwidth}
  \centering
  \includegraphics[width=.99\linewidth]{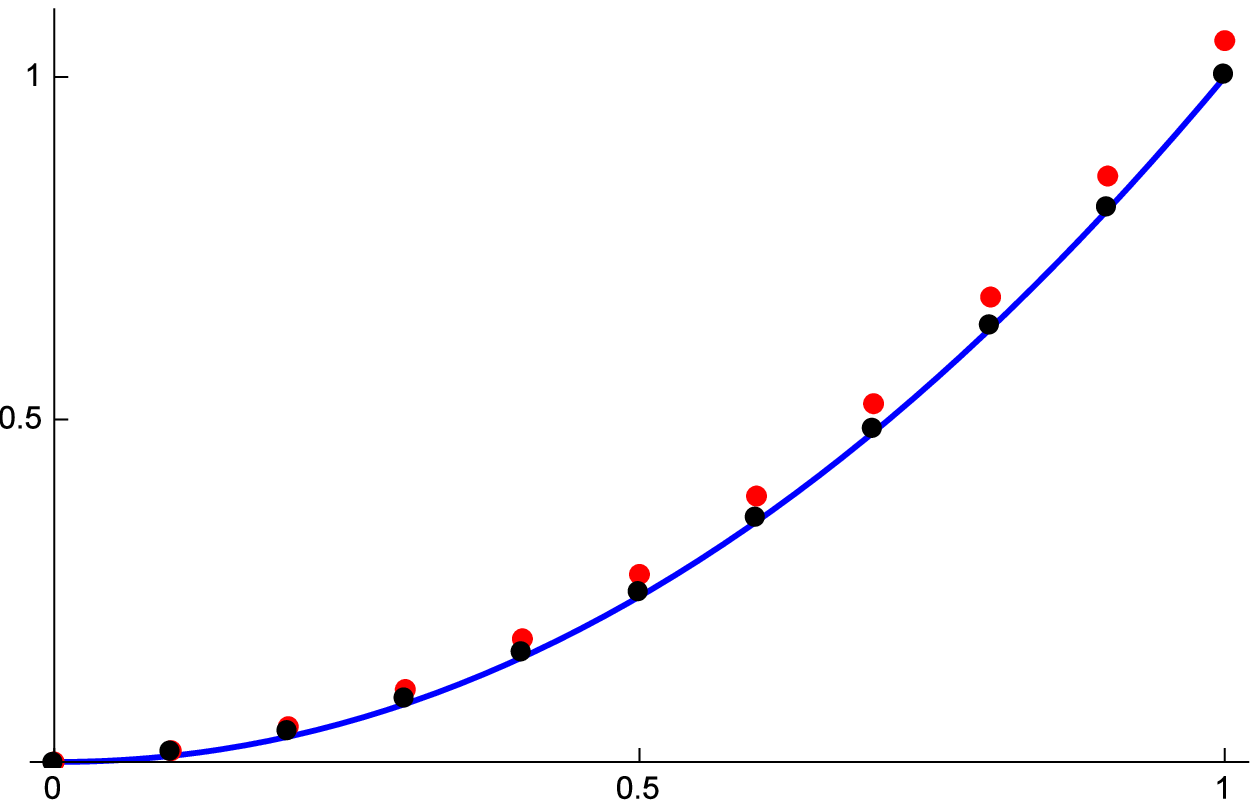}
  \captionof{figure} {Graph of the solution of equation \eqref{R1} and numerical solutions \eqref{L1_5} (red) and \eqref{CML1_5} (black) for $\alpha=0.95$.}
  \label{fig:test2}
\end{minipage}
\end{figure}

$\bullet$ The fractional relaxation equation
\begin{equation}\label{R2}
y^{(\aaaa)}+y=x^{\aaaa},\quad y(0)=1
\end{equation}
has the solution $y(x)=E_{\aaaa}\llll(-x^{a}\rrrr)+\GGGG(\aaaa+1)x^{2\aaaa}E_{\aaaa,2\aaaa+1}\llll(-x^{a}\rrrr)$. The function $E_{\aaaa}\llll(-x^{a}\rrrr)$ has an unbounded first derivative at $x=0$. The numerical solutions \eqref{L1_5} and \eqref{ML1_5} of relaxation equation \eqref{R2} have  accuracy $O\llll( h^\aaaa \rrrr)$. In \cite{Dimitrov2015_2} we described a method for improving the accuracy of the numerical solutions of the fractional relaxation and subdiffusion equations. The method uses the fractional Taylor polynomials of the solution
$$T^{(\aaaa)}_m(x)= \sum_{n=0}^m \dddd{y^{[n\aaaa]}(0)}{\GGGG(\aaaa n+1)}x^{\aaaa n}.$$
Now we determine the values of the Miller-Ross derivatives of the solution at the initial point $x=0$,
$$y^{[0]}(0)=y(0)=1,\quad y^{[\aaaa]}(0)=y^{(\aaaa)}(0)=-1.$$
By applying fractional differentiation of order $\aaaa$ to both sides of equation \eqref{R2} we obtain
$$ y^{[2\aaaa]}(x)+ y^{[\aaaa]}(x)=\GGGG(\aaaa+1),\quad y^{[2\aaaa]}(0)=1+\GGGG(\aaaa+1),$$
$$ y^{[3\aaaa]}(x)+ y^{[2\aaaa]}(x)=0,\quad y^{[3\aaaa]}(0)=-(1+\GGGG(\aaaa+1)),$$
$$ y^{[4\aaaa]}(x)+ y^{[3\aaaa]}(x)=0,\quad y^{[4\aaaa]}(0)=1+\GGGG(\aaaa+1).$$
By induction we can show that
$$y^{[n\aaaa]}(0)=(-1)^n(1+\GGGG(\aaaa+1)).$$
The solution of relaxation equation \eqref{R2} has fractional Taylor polynomials
$$T^{(\aaaa)}_m(x)=
1-\dddd{x^\aaaa}{\GGGG(\aaaa+1)}+(1+\GGGG(\aaaa+1))\sum_{n=2}^m \dddd{(-1)^n}{\GGGG(\aaaa n+1)} x^{n\aaaa}.$$
Now we apply the method from  \cite{Dimitrov2015_2}. Substitute
$$z(x)=y(x)-T^{(\aaaa)}_m(x).$$
The function $z(x)$ is a solution of the fractional relaxation equation
\begin{equation}\label{R3}
z^{(\aaaa)}+z=(-1)^{m+1}\frac{\Gamma (a+1)+1}{\Gamma (a m+1)} x^{a m},\quad z(0)=0.
\end{equation}
Equation \eqref{R3} has the solution 
\begin{align*}
z(x)=E_{\aaaa}\llll(-x^{a}\rrrr)+\GGGG(\aaaa+1)&x^{2\aaaa}E_{\aaaa,2\aaaa+1}\llll(-x^{a}\rrrr)-1\\
&+\dddd{x^\aaaa}{\GGGG(\aaaa+1)}-(1+\GGGG(\aaaa+1))\sum_{n=2}^m \dddd{(-1)^n}{\GGGG(\aaaa n+1)} x^{n\aaaa}.
\end{align*}
When $\aaaa=0.6$ and $m=4$ the solution of equation \eqref{R2} has a continuous third derivative on the interval $[0,1]$ and $z(0)=z'(0)=z''(0)=0$. In Table 5 we compute the maximum error and the order of numerical solutions \eqref{L1_5},\eqref{ML1_5} and \eqref{CML1_5} of the fractional relaxation equation \eqref{R3}   on the interval $[0,1]$. The error of numerical solution  \eqref{CML1_5} is smaller than the errors of \eqref{L1_5} and \eqref{ML1_5} when $h\leq 0.0125$. The numerical results from Table 4 and Table 5 are consistent with the expected accuracy
$O\llll( h^{2-\aaaa} \rrrr)$,$O\llll( h^2 \rrrr)$ and $O\llll( h^{3-\aaaa} \rrrr)$ of numerical solutions \eqref{L1_5},\eqref{ML1_5},\eqref{CML1_5} for the fractional relaxation equation.
\setlength{\tabcolsep}{0.5em}
{\renewcommand{\arraystretch}{1.1}
\begin{table}[ht]
	\caption{Maximum error and order of numerical solutions \eqref{L1_5},\eqref{ML1_5} and \eqref{CML1_5} of the fractional relaxation equation \eqref{R3}, when $\aaaa=0.6$ and $m=4$.}
	\centering
  \begin{tabular}{ l | c  c | c  c | c  c }
		\hline
		\hline
		\multirow{2}*{ $\quad \boldsymbol{h}$}  & \multicolumn{2}{c|}{{\bf L1}} & \multicolumn{2}{c|}{{\bf ML1}}  & \multicolumn{2}{c}{{\bf CML1}} \\
		\cline{2-7}  
   & $Error$ & $Order$  & $Error$ & $Order$  & $Error$ & $Order$ \\ 
		\hline \hline
$0.05$     & $0.00192$    & $1.33168$   & $0.0000367$          & $2.94874$  & $0.0000910$           & $2.29541$     \\ 
$0.025$    & $0.00075$    & $1.36199$   & $0.0000127$          & $1.57164$  & $0.0000179$           & $2.34716$      \\ 
$0.0125$   & $0.00029$    & $1.37706$   & $4.2\times 10^{-6}$  & $1.57238$  & $3.5\times 10^{-6}$   & $2.36921$    \\ 
$0.00625$  & $0.00011$    & $1.38564$   & $1.3\times 10^{-6}$  & $1.73716$  & $6.6\times 10^{-7}$   & $2.38225$     \\ 
$0.003125$ & $0.00004$    & $1.39085$   & $3.5\times 10^{-7}$  & $1.82637$  & $1.3\times 10^{-7}$   & $2.38973$     \\
\hline
  \end{tabular}
	\end{table}
	}
\subsection{Numerical Solutions of the Fractional Subdiffusion Equation}
	Let  $\mathcal{G}_N^M$ be a rectangular grid on  $[0,X]\times [0,T]$,
$$\mathcal{G}_N^M=\{(x_n,t_m)=(nh,m\tttt) |n=0,\cdots,N;m=0,\cdots,M\},
$$
where $h=X/N,\tttt=T/M$ and the numbers $N$ and $M$ are positive integers.
 In \cite{Dimitrov2015_1,Dimitrov2015_2} we derived the numerical solutions of the time-fractional subdiffusion equation on the rectangle $[0,X]\times [0,T]$,
			\begin{equation} \label{S5_1}
	\left\{
	\begin{array}{l l}
\dddd{\pppp^\aaaa u}{\pppp t^\aaaa}=\dddd{\pppp^2 u}{\pppp t^2}+F(x,t),\quad (x,t)\in [0,X]\times [0,T],&  \\
u(x,0)=u_0(x),\; u(0,t)=u_L(t),\; u(X,t)=u_R(t),&  \\
	\end{array}
		\right . 
	\end{equation}
 which use the $L1$ approximation \eqref{L1} and the modified $L1$ approximation \eqref{ML1} for the Caputo derivative. Let $U_n^m$ be an approximation for the value of the solution of the subdiffusion equation $u_n^m=u(n h,m \tttt)$ and
	\begin{equation}\label{ApproxR}
\dddd{1}{\GGGG(2-\aaaa)h^\aaaa}\sum_{k=0}^n \rho_k^{(\aaaa)} y_{n-k}\approx y^{(\aaaa)}_n,
\end{equation}
be an approximation for the Caputo derivative.
The finite-difference scheme for the subdiffusion equation which uses  approximation \eqref{ApproxR} is 
\begin{equation} \label{NS}
	K U^m=R^m,
	\end{equation}
	where $\eta =\GGGG(2-\alpha)\tau^{\alpha }/h^2$ and
the matrix $K$ is a tridiagonal square matrix  with values $\rho_0^{(\aaaa)}+2\eta$ on the main diagonal, and $-\eta$ on the diagonals above and below the main diagonal,
		$$K_{5} =
 \begin{pmatrix}
  \rho_0^{(\aaaa)}+2\eta &  -\eta    & 0        & 0          & 0       \\
 -\eta    &\rho_0^{(\aaaa)}+2\eta    & -\eta    & 0          & 0       \\
  0       & -\eta     & \rho_0^{(\aaaa)}+2\eta  & -\eta      & 0       \\
  0       & 0         & -\eta    & \rho_0^{(\aaaa)}+2\eta   & -\eta  \\
	0       & 0         & 0        & -\eta      &  \rho_0^{(\aaaa)}+2\eta   
 \end{pmatrix}.
$$
The vector $U^m$ is an $(N-1)$-dimensional vector whose elements $U_n^m$ are approximations of the solution on the $m^{th}$ layer of the grid $\mathcal{G}_N^M$. The vector $R^m$ has elements
$$R_1^m=\GGGG(2-\aaaa)\tttt^\aaaa F_1^m-\sum_{k=1}^m \rho_k^{(\aaaa)} U_1^{m-k}+\eta u_L(m\tttt),$$
$$R_n^m=\GGGG(2-\aaaa)\tttt^\aaaa F_n^m-\sum_{k=1}^m \rho_k^{(\aaaa)} U_n^{m-k},\qquad (2\leq n\leq N-2),$$
$$R_{N-1}^m=\GGGG(2-\aaaa)\tttt^\aaaa F_{N-1}^m-\sum_{k=1}^m \rho_k^{(\aaaa)} U_{N-1}^{m-k}+\eta u_R(m\tttt).$$
	We denote by [L1] and [ML1] the numerical solutions \eqref{NS} of the 
	subdiffusion which use the $L1$ approximation $\llll(\rho_k^{(\aaaa)}=\ssss_k^{(\aaaa)}\rrrr)$
	and the modified $L1$ approximation  $\llll(\rho_k^{(\aaaa)}=\dddddd_k^{(\aaaa)}\rrrr)$. Now we construct a  compact finite-difference scheme for numerical solution of the  subdiffusion equation using approximation \eqref{CML1} for the Caputo derivative on the three-point stencil 
	$$\{(x_n,t_m),(x_n,t_{m-1}),(x_n,t_{m-2})\},$$ 
	of the grid $\mathcal{G}_N^M$.
\begin{align*}
\dddd{1}{\GGGG(2-\aaaa)\tttt^\aaaa}\sum_{k=0}^m \dddddd_k^{(\aaaa)}  u(x_n,t_{m-k})=\dddd{13}{12}&\dddd{\pppp^\aaaa u(x_n,t_m)}{\pppp t^\aaaa}-\dddd{1}{6}\dddd{\pppp^\aaaa u(x_n,t_{m-1})}{\pppp t^\aaaa}\\
&+\dddd{1}{12}\dddd{\pppp^\aaaa u(x_n,t_{m-2})}{\pppp t^\aaaa}+O\llll(\tttt^{3-\aaaa}\rrrr).
\end{align*}
From equation \eqref{S5_1} we have
\begin{align*}
\dddd{\pppp^\aaaa u_n^{k}}{\pppp t^\aaaa}=\dddd{\pppp^2 u_n^{k}}{\pppp x^2}+F_n^{k}.
\end{align*}
Then
\begin{align*}
\dddd{1}{\GGGG(2-\aaaa)h^\aaaa}\sum_{k=0}^m \dddddd_k^{(\aaaa)}  u_n^{m-k}=\dddd{13}{12}&\llll(\dddd{\pppp^2 u_n^m}{\pppp x^2}+F_n^m\rrrr)-\dddd{1}{6}\llll(\dddd{\pppp^2 u_n^{m-1})}{\pppp x^2}+F_n^{m-1}\rrrr)\\
&+\dddd{1}{12}\llll(\dddd{\pppp^\aaaa u_n^{m-2}}{\pppp x^2}+F_n^{m-2}\rrrr)+O\llll(\tttt^{3-\aaaa}\rrrr).
\end{align*}
Denote
$$G_n^m=\dddd{13}{12}F_n^m-\dddd{1}{6}F_n^{m-1}+\dddd{1}{12}F_n^{m-2}.$$
By approximating the second derivatives in the space direction using the second-order central difference approximation we obtain
\begin{align*}
\dddddd&_0^{(\aaaa)} u_n^{m}+\sum_{k=1}^m \dddddd_k^{(\aaaa)}  u_n^{m-k}=\dddd{13\eta}{12}\llll(u_{n-1}^{m}-2u_n^{m}+u_{n+1}^{m}\rrrr)+\GGGG(2-\aaaa)h^\aaaa G_n^{m}\\
&-\dddd{\eta}{6}\llll(u_{n-1}^{m-1}-2u_n^{m-1}+u_{n+1}^{m-1}\rrrr)+\dddd{\eta}{12}\llll( u_{n-1}^{m-2}-2u_n^{m-2}+u_{n+1}^{m-2}\rrrr)+O\llll(\tttt^{3-\aaaa}+h^2\rrrr).
\end{align*}
The numbers $U_n^m\approx v_n^m$ satisfy the equations
\begin{align*}
-\dddd{13\eta}{12}U_{n-1}^{m}+\Bigg(\dddddd_0^{(\aaaa)}+\dddd{13\eta}{6}&\Bigg)U_n^{m}-\dddd{13\eta}{12}U_{n+1}^{m}=\GGGG(2-\aaaa)\tttt^\aaaa G_n^{m} \\
-\sum_{k=1}^m \dddddd_k^{(\aaaa)}&U_n^{m-k}-\dddd{\eta}{6}\llll(U_{n-1}^{m-1}-2U_n^{m-1}+U_{n+1}^{m-1}\rrrr)\\
&+\dddd{\eta}{12}\llll( U_{n-1}^{m-2}-2U_n^{m-2}+U_{n+1}^{m-2}\rrrr),
\end{align*}
where $1\leq n\leq N-1$ and  $1\leq m\leq M$. By considering the boundary conditions we obtain the three-point compact finite-difference scheme for the time fractional subdiffusion equation
\begin{equation}\label{NS5}
	K U^m=R^m,
	\end{equation}
where	$K$ is a triangular $(N-1)\times(N-1)$ matrix with values $\dddddd_0^{(\aaaa)}+13\eta/6$ on the main diagonal, and $-13\eta/12$ on the diagonals above and below the main diagonal,
		$$K_{5} =
 \begin{pmatrix}
  \dddddd_0^{(\aaaa)}+13\eta/6 &  -13\eta/12    & 0        & 0          & 0       \\
 -13\eta/12    &\dddddd_0^{(\aaaa)}+13\eta/6    & -13\eta/12    & 0          & 0       \\
  0       & -13\eta/12     & \dddddd_0^{(\aaaa)}+13\eta/6  & -13\eta/12      & 0       \\
  0       & 0         & -13\eta/12    & \dddddd_0^{(\aaaa)}+13\eta/6   & -13\eta/12  \\
	0       & 0         & 0        & -13\eta/12      &  \dddddd_0^{(\aaaa)}+13\eta/6   
 \end{pmatrix}.$$
The vector $R^m$ is an $(N-1)$-dimensional vector with elements
\begin{align*}
R_1^m=\GGGG(&2-\aaaa)\tttt^\aaaa G_1^m-\sum_{k=1}^m \dddddd_k^{(\aaaa)} U_1^{m-k}+\dddd{13}{12}u_L(m\tttt)-\dddd{1}{6}u_L((m-1)\tttt)\\
&+\dddd{1}{12}u_L((m-2)\tttt)-\dddd{\eta}{6}\llll(-2U_{1}^{m-1}+U_{2}^{m-1} \rrrr)+\dddd{\eta}{12}\llll(-2U_{1}^{m-2}+U_{2}^{m-2} \rrrr),
\end{align*}
and
\begin{align*}
R_n^m=\GGGG(2-\aaaa)&\tttt^\aaaa G_n^m-\sum_{k=1}^m \dddddd_k^{(\aaaa)} U_n^{m-k}\\
&-\dddd{\eta}{6}\llll(U_{n-1}^{m-1}-2U_{n}^{m-1}+U_{n+1}^{m-1} \rrrr)+\dddd{\eta}{12}\llll(U_{n-1}^{m-2}-2U_{n}^{m-2}+U_{n+1}^{m-2} \rrrr),
\end{align*}
for $2\leq n\leq N-2$
\begin{align*}
R_{N-1}^m=&\GGGG(2-\aaaa)\tttt^\aaaa G_{N-1}^m-\sum_{k=1}^m \dddddd_k^{(\aaaa)} U_{N-1}^{m-k}+\dddd{13}{12}u_R(m\tttt)-\dddd{1}{6}u_R((m-1)\tttt)\\
&+\dddd{1}{12}u_R((m-2)\tttt)-\dddd{\eta}{6}\llll(U_{N-2}^{m-1}-2U_{1}^{m-1}\rrrr)+\dddd{\eta}{12}\llll(U_{N-2}^{m-2}-2U_{N-1}^{m-2} \rrrr).
\end{align*}
Denote by [CML1] the three-point compact finite-difference scheme \eqref{NS5} for the time-fractional subdiffusion equation. In Table 6 and Table 7 we compute the order of numerical solutions [L1], [ML1], [CML1] in the directions of space and time. The orders are computed by fixing the value of one of the parameters $N=50$ or $M=50$ and computing the order of the numerical solution by doubling the value of the other parameter.

$\bullet$ The fractional subdiffusion equation
	\begin{equation}\label{S05}
	\left\{
	\begin{array}{l l}
\dddd{\pppp^\aaaa u}{\pppp t^\aaaa}=\dddd{\pppp^2 u}{\pppp t^2}+F(x,t)\quad (x,t)\in [0,1]\times [0,1],&  \\
u(x,0)=0,\quad u(0,t)=u_L(t)=t^{3-\aaaa},\quad u(1,t)=u_R(t)=6 t^{3-\aaaa},&  \\
	\end{array} 
		\right . 
	\end{equation}
	where
		$$F(x,t)=\dddd{\GGGG(4-\aaaa)}{\GGGG(4-2\aaaa)}\llll(1+2x^2+3x^3\rrrr)t^{3-2\aaaa}-(4-18x)t^{3-\aaaa},
	$$
	has the solution $u(x,t)=\llll(1+2x^2+3x^3\rrrr)t^{3-\aaaa}$. The solution  of equation \eqref{S05}  satisfies the condition $y'(0)=0$.
\setlength{\tabcolsep}{0.5em}
{\renewcommand{\arraystretch}{1.1}
\begin{table}[ht]
	\caption{Space and time  orders of numerical solutions [L1], [ML1] and [CML1] for equation \eqref{S05} on the interval $[0,1]$, when $\aaaa=0.4$.}
	\centering
		  \begin{tabular}{ l | c  c | c  c | c  c }
			\hline
		\hline
		\multirow{2}*{ $\quad  \boldsymbol{h/\tttt}$}  & \multicolumn{2}{c|}{{\bf L1}} & \multicolumn{2}{c|}{{\bf ML1}}  & \multicolumn{2}{c}{{\bf CML1}} \\
		\cline{2-7}  
   & $Space$ & $Time$  &  $Space$ & $Time$  &  $Space$ & $Time$\\
		\hline \hline
 0.0125& $1.99957$    & $1.52246$   & $1.99969$   & $1.95593$  & $1.9989$    & $2.16901$    \\ 
0.00625 & $1.99990$    & $1.55257$   & $1.99951$   & $1.93991$  & $1.99972$   & $2.42599$    \\ 
0.003125&  $2.00107$    & $1.56733$   & $2.01293$   & $1.95242$  & $1.99993$   & $2.51445$    \\ 
0.0015625&  $1.99926$    & $1.57640$   & $2.09988$   & $1.96646$  & $1.99995$   & $2.55358$    \\ 
0.00078125&  $2.01012$    & $1.58260$   & $2.16357$   & $1.97727$  & $2.00010$   & $2.57319$    \\
\hline
  \end{tabular}
	\end{table}
	}
	
	$\bullet$ The fractional subdiffusion equation
		\begin{equation}\label{S06}
	\left\{
	\begin{array}{l l}
\dddd{\pppp^\aaaa u}{\pppp t^\aaaa}=\dddd{\pppp^2 u}{\pppp t^2}\quad (x,t)\in [0,\pi]\times [0,\pi],&  \\
u(x,0)=\sin x,\quad u(0,t)=0,\quad u(\pi,t)=0,&  \\
	\end{array} 
		\right . 
	\end{equation}
	has the solution $u(x,t)=\sin x E_\aaaa\llll(-t^\aaaa \rrrr)$. In \cite{Dimitrov2015_2} we determined  the partial Miller-Ross derivatives of the solution at $t=0$,
		$$\llll. \dddd{\pppp^{n\aaaa} u(x,t)}{\pppp t^{n\aaaa}}\rrrr|_{t=0}
		=(-1)^{n}\sin x.$$
The  solution of \eqref{S06} has fractional Taylor polynomials of  at $t=0$
		$$T_m^{(\aaaa)}(x,t)= \sin x \sum_{n=0}^m (-1)^n \dddd{t^{n\aaaa}}{\GGGG(n\aaaa+1)}.$$
Substitute
		$$v(x,t)=u(x,t)-\sin x \sum_{n=0}^m (-1)^n \dddd{t^{n\aaaa}}{\GGGG(n\aaaa+1)}.$$
	The function $v(x,t)$ is a solution of the  subdiffusion equation \cite{Dimitrov2015_2}
		\begin{equation*}
	\left\{
	\begin{array}{l l}
	\dfrac{\partial^{\aaaa} v(x,t)}{\partial t^{\aaaa}}=\dfrac{\partial^2 v(x,t)}{\partial x^2}+(-1)^{m+1}\sin x\dddd{t^{m\aaaa}}{\GGGG(m\aaaa+1)},&  \\
	v(x,0)=v(0,t)=v(\pi,t)=0.&  \\
	\end{array}
		\right . 
	\end{equation*}
When $\aaaa=0.5,m=4$  we obtain the    equation
	\begin{equation}\label{S15}
	\left\{
	\begin{array}{l l}
	\dfrac{\partial^{0.5} u(x,t)}{\partial t^{0.5}}=\dfrac{\partial^2 u(x,t)}{\partial x^2}-\dddd{t^{2}\sin x}{2},&  \\
	u(x,0)=\sin x,\;u(0,t)=u(\pi,t)=0,&  \\
	\end{array} 
		\right . 
	\end{equation}
	When   $\aaaa=0.5,m=5$ we obtain the equation
	\begin{equation}\label{S16}
	\left\{
	\begin{array}{l l}
	\dfrac{\partial^{0.5} u(x,t)}{\partial t^{0.5}}=\dfrac{\partial^2 u(x,t)}{\partial x^2}+\dddd{15\sqrt \pi t^{2.5} \sin x }{8},&  \\
	u(x,0)=\sin x,\;u(0,t)=u(\pi,t)=0,&  \\
	\end{array} 
		\right . 
	\end{equation}
	In Table 7 we compute the  orders of numerical solutions [L1], [ML1], [CML1] for the subdiffusion equations \eqref{S15} and \eqref{S16} in the space and time directions. 	
\setlength{\tabcolsep}{0.5em}
{\renewcommand{\arraystretch}{1.1}
\begin{table}[ht]
	\caption{Space and time  orders of numerical solutions [L1], [ML1]   for equation \eqref{S15} and numerical solution [CML1] for equation \eqref{S16}}
	\centering
		  \begin{tabular}{ l | c  c | c  c | c  c }
		\hline
		\hline
		\multirow{2}*{ $\quad \boldsymbol{h/\tttt}$}  & \multicolumn{2}{c|}{{\bf L1}} & \multicolumn{2}{c|}{{\bf ML1}}  & \multicolumn{2}{c}{{\bf CML1}} \\
		\cline{2-7}  
   & $Space$ & $Time$  &  $Space$ & $Time$  &  $Space$ & $Time$\\
		\hline \hline
 0.03926991&$2.00023$    & $1.3777$    & $2.00015$   & $2.43060$  & $2.00022$   & $2.14451$   \\ 
  0.01963495&$2.00006$    & $1.44914$   & $2.00004$   & $2.22449$  & $2.00006$   & $2.35878$   \\ 
  0.00981748&$2.00001$    & $1.47435$   & $2.00001$   & $2.10396$  & $2.00001$   & $2.43433$   \\ 
  0.00490874&$2.00000$    & $1.48522$   & $2.00000$   & $2.04184$  & $2.00000$   & $2.46707$   \\ 
  0.00245437&$2.00000$    & $1.49071$   & $2.00000$   & $2.01288$  & $2.00001$   & $2.48269$   \\
\hline
  \end{tabular}
	\end{table}
	}
	\section{Conclusions}
	In the present paper we obtained the  compact modified $L1$ approximation \eqref{CML1} for the Caputo fractional derivative and we computed the numerical solutions of the fractional relaxation and  subdiffusion equations. 
	 Numerical solution \eqref{NS5} is the first three-point compact finite-difference scheme for the fractional subdiffusion equation which uses the coefficients of the $L1$ approximation and the value of the Riemann zeta function at the point $\aaaa-1$. The accuracy of the compact modified $L1$ approximation  is $O\llll(h^{3-\aaaa}\rrrr)$, while the three-point compact Gr\"nwald approximation \eqref{3PTG} has accuracy $O\llll(h^{3}\rrrr)$. One advantage of compact approximation \eqref{CML1} is that the coefficients on the right-hand side are the same for all values of $\aaaa$, as well as the that the only requirement for the function $y$ is that $y'(0)=0$.
	An important question for future work is to prove the convergence of the numerical methods discussed in the paper. Another question for future work is to derive higher-order compact approximations based on the the $L1$ approximation and Gr\"unwald formula  approximation for the Caputo derivative.


\begin{thebibliography}{99}
\bibitem{AbbaszadehMohebbi2013} M. Abbaszadeh , A. Mohebbi: {\it A fourth-order compact solution of the two-dimensional modified anomalous fractional sub-diffusion equation with a nonlinear source term,} Computers \& Mathematics with Applications, {\bf  66(8)} (2013), 1345--1359.
\bibitem{AbramowitzStegun1964}M. Abramowitz, I. A. Stegun: {\it Handbook of Mathematical Functions with Formulas, Graphs, and Mathematical Tables,} Dover, New York; 1964.
\bibitem{Alikhanov2015} A. A. Alikhanov :{\it A new difference scheme for the time fractional diffusion equation,} {\bf 280} (2015), 424--438.
\bibitem{CaoLiChen2015} J. Cao, C. Li, Y. Q. Chen: {\it High-order approximation to Caputo derivatives and Caputo-type advection-diffusion equations (II),} Fractional Calculus and Applied Analysis, {\bf 18(3)} (2015), 735--761.
\bibitem{Blank1996} L. Blank: {\it Numerical treatment of differential equations of fractional order}, Numerical Analysis Report No. 287, The University of Manschester (1996).
\bibitem{Cartea2007}  A. Cartea, D. del Castillo-Negrete:  {\it Fractional diffusion models of option prices in markets with jumps,}  Physica A, {\bf 374(2)} (2007), 749--763.
\bibitem{ChenDeng2014} Minghua Chen, Weihua Deng: {\it 	
Fourth Order Accurate Scheme for the Space Fractional Diffusion Equations,}
SIAM J. Numer. Anal., {\bf 52(3)} (2014), 1418--1438.
\bibitem{ChenDeng2014_2} Minghua Chen, Weihua Deng: {\it WSLD operators II: the new fourth order difference approximations for space Riemann-Liouville derivative,}, {\bf 	arXiv:1306.5900} (2014).
\bibitem{DengLi2012}  W. Deng, C. Li: {\it Numerical Schemes for Fractional Ordinary Differential Equations,} In: Miidla, P. (ed.) Numerical Modelling. InTech, Rijeka (2012), 355--374.
\bibitem{Diethelm1997} K. Diethelm: {\it An algorithm for the numerical solution of differential equations of fractional order,} Electronic Transactions on Numerical Analysis, {\bf 5} (1997), 1--6.
\bibitem{Diethelm2010} K. Diethelm:  {\it The Analysis of Fractional Differential Equations: An Application-Oriented Exposition Using Differential Operators of Caputo Type,} Springer; 2010.
\bibitem{Dimitrov2014} Y. Dimitrov: {\it Numerical approximations for fractional differential equations,} Journal of Fractional Calculus and Applications, {\bf 5(3S)} (2014), No. 22, 1--45. 
\bibitem{Dimitrov2015_1} Y. Dimitrov: {\it A second order approximation for the Caputo fractional derivative,} {\bf arXiv:1502.00719} (2015).
\bibitem{Dimitrov2015_2} Y. Dimitrov: {\it A new method for numerical solution of the fractional relaxation and subdiffusion equations using fractional Taylor polynomials,} {\bf arXiv:1503.02958} (2015).
\bibitem{AjouArqubZhourMomani2013} A. El-Ajou, O. Arqub, Z. Zhour, S.  Momani: {\it New results on fractional power series: theory and applications,} Entropy, {\bf 15} (2013),  5305-5323. 
\bibitem{GaoSun2015}G.-H. Gao, H.-W. Sun:  {\it Three-point combined compact difference schemes for time-fractional advection-diffusion equations with smooth solutions}, Journal of Computational Physics,  {\bf 298(1)} (2015),  520--538.
\bibitem{GaoSunSun2015} G.-H. Gao, H.-W. Sun, Z.-Z. Sun: {\it Stability and convergence of finite difference schemes for a class of time-fractional sub-diffusion equations based on certain superconvergence,} Journal of Computational Physics, {\bf 280} (2015), 510--528.
\bibitem{GaoSunZhang2012} G. Gao, Z. Sun, Y. Zhang: {\it A finite difference scheme for fractional sub-diffusion equations on an unbounded domain using artificial boundary conditions,} Journal of Computational Physics {\bf 231(7)} (2012), 2865--2879.
\bibitem{GaoSunZhang2014} G. H. Gao, Z. Z. Sun, H. W. Zhang: {\it A new fractional numerical differ-entiation formula to approximate the Caputo fractional derivative and its applications,} J. Comput. Phys.,  {\bf 259} (2014), 33--50.
\bibitem{GulsuOzturkAnapali2013} M. G\"ulsu, Y. \"Ozt\"urk,  A. Anapal{\i}: {\it Numerical approach for solving fractional relaxation-oscillation equation,} Applied Mathematical Modelling, {\bf 37(8)} (2013), 5927--5937.
\bibitem{HuangLiu2005} F. Huang, F. Liu: {\it The space-time fractional diffusion equation with Caputo derivatives}, Journal of Applied Mathematics and Computing, {\bf 19(1)} (2005), 179--190.
\bibitem{Ishteva2005} M. Ishteva: {\it Properties and applications of the Caputo fractional operator,} Master thesis,  University of Karlsurhe  (2005).
\bibitem{JiSun2015} C.-C. Ji, Z.-Z. Sun: {\it High-order compact finite difference scheme for the fractional sub-diffusion equation,} Journal of Scientific Computing, {\bf 64(3)}    (2015), 959--985.
\bibitem{JiSun2015_2}C.-C. Ji , Z.-Z. Sun:  {\it The high-order compact numerical algorithms for the two-dimensional fractional sub-diffusion equation,} Applied Mathematics and Computation, {\bf 269} (2015), 775--79.
\bibitem{JovanovicVulkovDelic2013}B. S. Jovanovi\'c, L. G. Vulkov, A. Deli\'c: {\it Boundary value problems for fractional PDE and their numerical approximation,} Numerical Analysis and Its Applications, Lecture Notes in Computer Science, Springer, {\bf 8236} (2013),38--49.
\bibitem{KaczorekRogowski2015} T. Kaczorek, K. Rogowski: {\it Fractional Linear Systems and Electrical Circuits,} Springer (2015).
\bibitem{Kouba2013} O. Kouba: {\it Bernoulli Polynomials and Applications,}  Lecture Notes (2013).
\bibitem{KrishnaveniKannanBalachandar2013}K. Krishnaveni, K. Kannan,  S. R. Balachandar: {\it Polynomial approximation method for solving composite fractional relaxation/oscillation equations,} World Applied Sciences Journal {\bf 25(12)} (2013) 1789--1796.
\bibitem{Lampret2001} V. Lampret: {\it The Euler-Maclaurin and Taylor Formulas: Twin, Elementary Derivations,} Mathematics Magazine, {\bf 74(2)} (2001), 109--122.
\bibitem{Lazarov2015_2} B.Jin, R. Lazarov, Z. Zhou: {\it Two fully discrete schemes for fractional diffusion and diffusion-wave equations,} 	{\bf arXiv:1404.3800} (2015). 
\bibitem{Lazarov2015_1} B.Jin, R. Lazarov,  Z. Zhou: {\it An analysis of the $L1$ scheme for the subdiffusion equation with nonsmooth data,} {\bf arXiv:1501.00253} (2015).
\bibitem{LiDengWu2011} C. Li, W. Deng, Y. Wu: {\it Numerical analysis and physical simulations for the time fractional radial diffusion equation,} Computers and Mathematics with Applications {\bf 62(3)} (2011), 1024--1037.
\bibitem{LiChenYe2011}C. Li, A. Chen, J. Ye: {\it Numerical approaches to fractional calculus and fractional ordinary
differential equation,} Journal of Computational Physics {\bf 230(9)} (2011) 3352--3368.
\bibitem{LiDing2014} C. Li, H. Ding: {\it Higher order finite difference method for the reaction and anomalous-diffusion equation,} {\bf 38(15–16)} (2014), 3802--382.
\bibitem{LiWuDing2015} C. Li, R. Wu, H. Ding: {\it High-order approximation to Caputo derivatives and Caputo-type advection-diffusion equations,} Communications in Applied and Industrial Mathematics, {\bf 6(2)}  (2015).
\bibitem{LiZhaoDengWu2014}C. Li, T. Zhao, W. Deng, Y. Wu: {\it Orthogonal spline collocation methods for the subdiffusion equation,} Journal of Computational and Applied Mathematics, {\bf 255} (2014), 517--528.  
\bibitem{LimaFordLumb2014}P. M. Lima, N. J. Ford, P. M. Lumb: {\it Computational methods for a mathematical model of propagation of nerve impulses in myelinated axons,} Applied Numerical Mathematics {\bf 85} (2014), 38--53.
\bibitem{LinXu2007} Y. Lin, C. Xu: {\it Finite difference/spectral approximations for the time-fractional diffusion equation,} Journal of Computational Physics, {\bf 225(2)} (2007), 1533--1552.
\bibitem{Lubich1986}C. Lubich: {\it Discretized fractional calculus,} SIAM J. Math. Anal., {\bf 17} (1986), 704--719.
\bibitem{Mainardi1996} F, Mainardi: {\it Fractional relaxation-oscillation and fractional diffusion-wave phenomena,} Chaos, Solitons $\&$ Fractals, {\bf 7(9)} (1996), 1461--1477.
\bibitem{MillerRoss1993} K.S. Miller, B. Ross:  {\it An Introduction to the Fractional Calculus and Fractional Differential Equations,} John Wiley \& Sons, New York (1993).
\bibitem{Murio2008} D. A. Murio: {\it Implicit finite difference approximation for time fractional diffusion equations , Computers \& Mathematics with Applications,} {\bf 56(4)} (2008), 1138--1145.
\bibitem{MuslihAgrawalBaleanu2010} S. I. Muslih, Om P. Agrawal, D. Baleanu: {\it A fractional Schr\"odinger equation and its solution,} International Journal of Theoretical Physics, {\bf 49(8)} (2010), 1746--1752.
\bibitem{Podlubny1999} I. Podlubny:  {\it Fractional Differential Equations,} Academic Press, San Diego (1999).
\bibitem{Sidi2004} A. Sidi: {\it Euler-Maclaurin expansions for integrals with endpoint singularities: A new perspective,} Numer. Math., {\bf 98(2)} (2004), 371--387.
\bibitem{WangVong2014} Z. Wang, S. Vong: {\it Compact difference schemes for the modified anomalous fractional sub-diffusion equation and the fractional diffusion-wave equation,} Journal of Computational Physics
{\bf 277} (2014), 1--15.
\bibitem{WeiChen2013}S. Wei, W. Chen: {\it A Matlab toolbox for fractional relaxation-oscillation equations,} {\bf arXiv:1302.3384} (2013).
\bibitem{Weierstrass1885} K. Weierstrass: {\it \"Uber die analytische Darstellbarkeit sogenannter willk\"urlicher Functionen einer reellen
Ver\"anderlichen, Sitzungsberichte der Akademie zu Berlin,} (1885), 633--639 and 789--805.
\bibitem{Weniger2007} E. J. Weniger:  {\it Asymptotic approximations to truncation errors of series representations for special functions,} in A. Iske and J. Levesley (Eds.), Algorithms for Approximation, (2007), 331--348.
\bibitem{WuDingLi2014} R. Wu, H. Ding, C. Li: {\it Determination of coefficients of high-order schemes for Riemann-Liouville derivative,} The Scientific World Journal, (2014), Article ID 402373.
\bibitem{YanPalFord2014}Y. Yan, K. Pal, N. J. Ford: {\it Higher order numerical methods for solving fractional differential equations,} BIT Numer Math {\bf 54(2)} (2014), 555--584.
\bibitem{YeLiuAnh2015}H. Ye, F. Liu, V. Anh: {\it Compact difference scheme for distributed-order time-fractional diffusion-wave equation on bounded domains}, Journal of Computational Physics {\bf 298}  (2015), 652--660.
\bibitem{ZengLiLiuTurner2013} F. Zeng, C. Li, F. Liu, I. Turner:
 {\it The use of finite Difference/element approaches for solving the time-fractional subdiffusion equation,} SIAM J. Sci. Comput., {\bf 35(6)} (2013), A2976--A3000.
\bibitem{ZengLiLiuTurner2015} F. Zeng, C. Li, F. Liu,  I. Turner: {\it Numerical Algorithms for Time-Fractional Subdiffusion Equation with Second-Order Accuracy}, SIAM Journal on Scientific Computing, {\bf 37(1)} (2015), A55--A78.
\bibitem{ZhaoDeng2015} L. Zhao, W. Deng: {\it A series of high-order quasi-compact schemes for space fractional diffusion equations based on the superconvergent approximations for fractional derivatives}, Numerical Methods for Partial Differential Equations, {\bf 31(5)} (2015), 1345--1381.
\bibitem{ZhuangLiu2006} P. Zhuang, F. Liu: {\it Implicit difference approximation for the time fractional diffusion equation,} Journal of Applied Mathematics and Computing, {\bf 22(3)} (2006), 87--99.

\end{thebibliography}
\end{document}